\def\NZQ{\mathbb}               
\def\QQ{{\NZQ Q}}
\def\ZZ{{\NZQ Z}}
\def\RR{{\NZQ R}}
\def\opn#1#2{\def#1{\operatorname{#2}}} 
\opn\cone{cone}
\opn\lex{lex}
\opn\rev{rev}
\opn\Syz{Syz} \opn\Im{Im} \opn\Ker{Ker} \opn\Coker{Coker}
\opn\Hom{Hom} \opn\Tor{Tor} \opn\Ext{Ext}
\opn\End{End} \opn\Aut{Aut} \opn\id{id} \opn\nat{nat}
\opn\mod{mod} \opn\ord{ord}
\opn\aff{aff} \opn\con{conv} \opn\relint{relint} \opn\st{st}
\opn\lk{lk} \opn\cn{cn} \opn\core{core} \opn\vol{vol}
\opn\link{link} \opn\star{star} \opn\sgn{sgn}
\def\Ac{{\mathcal A}}
\def\Fc{{\mathcal F}}
\def\Pc{{\mathcal P}}
\def\Qc{{\mathcal Q}}
\theoremstyle{thmit} 
\newtheorem{Theorem}{Theorem}[section]
\newtheorem{Lemma}[Theorem]{Lemma}
\newtheorem{Corollary}[Theorem]{Corollary}
\newtheorem{Proposition}[Theorem]{Proposition}
\newtheorem{Conjecture}[Theorem]{Conjecture}
\newtheorem{Remark}[Theorem]{Remark}
\newtheorem{Example}[Theorem]{Example}
\newcommand{\set}[1]{\left\{#1\right\}}  
\newcommand{\with}{\ \vrule\ }  
\newcommand{\defa}{:=}
\newcommand{\bvec}[1]{b_{#1}}
\title{Normal cyclic polytopes and cyclic polytopes that are not very ample}
\author{Takayuki Hibi}
\address{Department of Pure and Applied Mathematics, \\
Graduate School of Information Science and Technology, \\
Osaka University, Toyonaka, Osaka 560-0043, Japan \\
hibi@math.sci.osaka-u.ac.jp}
\author{Akihiro Higashitani}
\address{Department of Pure and Applied Mathematics, \\
Graduate School of Information Science and Technology, \\
Osaka University, Toyonaka, Osaka 560-0043, Japan \\
a-higashitani@cr.math.sci.osaka-u.ac.jp}
\author{Lukas Katth\"an}
\address{Fachbereich Mathematik und Informatik, \\
Philipps-Universit\"at Marburg, 35032 Marburg, Germany \\
katthaen@mathematik.uni-marburg.de}%
\author{Ryota Okazaki}
\address{Faculty of Education, Fukuoka University of Education, \\
Munakata, Fukuoka 811-4192, Japan \\
rokazaki@fukuoka-edu.ac.jp}
\thanks{{\bf 2010 Mathematics Subject Classification:}
Primary 52B20; Secondary 05E40, 05E45. \\
{\bf Key words and phrases:}
cyclic polytope, normal polytope, very ample polytope.\\
{\bf Acknowledgements:}
The first and forth authors are supported by the JST CREST 
``Harmony of Gr\"obner Bases and the Modern Industrial Society.'' \\
The second author is supported by JSPS Research Fellowship for Young Scientists. \\
This research was performed while the third author was staying
at Department of Pure and Applied Mathematics, 
Osaka University, November 2011 -- April 2012, supported by the DAAD
}
\begin{document}

\maketitle

\begin{abstract}
Let $d$ and $n$ be positive integers
with $n \geq d + 1$ and $\tau_{1}, \ldots, \tau_{n}$ integers 
with $\tau_{1} < \cdots < \tau_{n}$.  
Let $C_{d}(\tau_{1}, \ldots, \tau_{n}) \subset \RR^{d}$ 
denote the cyclic polytope of dimension $d$ with $n$ vertices
$(\tau_{1},\tau_{1}^{2},\ldots,\tau_{1}^{d}), \ldots,  
(\tau_{n},\tau_{n}^{2},\ldots,\tau_{n}^{d})$.
We are interested in finding the smallest integer $\gamma_{d}$
such that if $\tau_{i+1} - \tau_{i} \geq \gamma_{d}$ for $1 \leq i < n$, 
then $C_{d}(\tau_{1}, \ldots, \tau_{n})$ is normal.
One of the known results is $\gamma_{d} \leq d (d + 1)$.  
In the present paper a new inequality $\gamma_{d} \leq d^{2} - 1$ is proved.
Moreover, it is shown that  
if $d \geq 4$ with $\tau_{3} - \tau_{2} = 1$, then
$C_{d}(\tau_{1}, \ldots, \tau_{n})$ 
is not very ample. 
\end{abstract}

\section*{Introduction}
The cyclic polytope is one of the most distinguished polytopes and 
played the essential role in the classical theory of convex polytopes (\cite{Gru}).  
Let $d$ and $n$ be positive integers
with $n \geq d + 1$ and $\tau_{1}, \ldots, \tau_{n}$ real numbers 
with $\tau_{1} < \cdots < \tau_{n}$.  
The convex polytope $C_{d}(\tau_{1}, \ldots, \tau_{n})$
which is the convex hull of the finite set
\[
\{
(\tau_{1},\tau_{1}^{2},\ldots,\tau_{1}^{d}), \ldots, (\tau_{n},\tau_{n}^{2},\ldots,\tau_{n}^{d}) 
\} \subset \RR^{d}
\]
is called a {\em cyclic polytope}.
It is known that $C_{d}(\tau_{1}, \ldots, \tau_{n})$
is a simplicial polytope of dimension $d$ with $n$ vertices.
The combinatorial type of $C_{d}(\tau_{1}, \ldots, \tau_{n})$ 
is independent of the particular choice of   
real numbers $\tau_{1}, \ldots, \tau_{n}$.

The present paper is devoted to the study on {\em integral} cyclic polytopes. 
A convex polytope is called integral if all of its vertices have integer coordinates.
The integral convex polytope has established an active area lying between  
combinatorics and commutative algebra (\cite{HibiRedBook, StanleyGreenBook}).

Let, in general, $\Pc \subset \RR^{N}$ be an integral convex polytope, 
define $\Pc^{*} \subset \RR^{N+1}$ to be the convex hull of all points
$(1, \alpha) \in \RR^{N+1}$ with $\alpha \in \Pc$ and let $\Ac_\Pc = \Pc^{*} \cap \ZZ^{N+1}$
denote the set of integer points in $\Pc^{*}$.
Let $\ZZ_{\geq 0}$ denote the set of nonnegative integers and
$\QQ_{\geq 0}$ the set of nonnegative rational numbers.

We say that $\Pc$ is {\em normal}
if one has 
\[
\ZZ_{\geq 0}\Ac_\Pc = \ZZ\Ac_\Pc \cap \QQ_{\geq 0}\Ac_\Pc.
\]
Moreover, $\Pc$ is called {\em very ample}
if the set
\[
(\ZZ\Ac_\Pc \cap \QQ_{\geq 0}\Ac_\Pc) \setminus \ZZ_{\geq 0}\Ac_\Pc
\]
is finite. 
One of the most fundamental questions on integral convex polytopes is
to determine whether a given integral convex polytope is normal (\cite{OHnormal}).

On the other hand, we say that an integral convex polytope
$\Pc \subset \RR^{N}$ has the {\em integer decomposition property}
if, for each $m = 1, 2, \ldots$ and for each $\alpha \in m \Pc \cap \ZZ^{N}$,
there exist $\alpha_{1}, \ldots, \alpha_{m}$ belonging to $\Pc \cap \ZZ^{N}$
such that $\alpha = \alpha_{1} + \cdots + \alpha_{m}$.
Here $m \Pc = \{ \, m \alpha \, : \, \alpha \in \Pc \, \}$.
If $\Pc$ has the integer decomposition property, then $\Pc$ is normal.
However, the converse is false.  For example, the tetrahedron 
${\mathcal T}_{3} \subset \RR^{3}$
with the vertices $(0,0,0), (1,1,0), (1,0,1)$ and $(0,1,1)$ is normal, but
cannot have the integer decomposition property because
$(1,1,1) \in 2 {\mathcal T}_{3}$.
If $\Pc \subset \RR^{d}$ is an integral convex polytope of dimension $d$ with 
$\ZZ (\Pc^{*} \cap \ZZ^{d+1}) = \ZZ^{d+1}$, then 
$\Pc$ has the integer decomposition property if and only if $\Pc$ is normal. 
Lemma \ref{lemma:basis} says that 
every integral cyclic polytope $\Pc \subset \RR^d$ 
satisfies $\ZZ (\Pc^* \cap \ZZ^{d+1}) = \ZZ^{d+1}$. 
In particular it follows that an integral cyclic polytope is normal 
if and only if it has the integer decomposition property.

Let, as before, $d$ and $n$ be positive integers with $n \geq d + 1$.
Given integers $\tau_{1}, \ldots, \tau_{n}$ 
with $\tau_{1} < \cdots < \tau_{n}$, we wish to examine whether 
$C_{d}(\tau_{1}, \ldots, \tau_{n})$ is normal or not.
Thus our final goal is to classify the integers 
$\tau_{1}, \ldots, \tau_{n}$ 
with $\tau_{1} < \cdots < \tau_{n}$ for which 
$C_{d}(\tau_{1}, \ldots, \tau_{n})$ is normal.
Even though to find a complete classification seems to be rather difficult,
many fascinating problems arise in the natural way.
As a first step toward our goal,
we are interested in finding the smallest integer 
$\gamma_{d}$ such that if $\tau_{i+1} - \tau_{i} \geq \gamma_{d}$ 
for $1 \leq i < n$, 
then $C_{d}(\tau_{1}, \ldots, \tau_{n})$ is normal.
Since the lattice length of each edge $\con(\{(\tau_i,\ldots,\tau_i^d),(\tau_j,\ldots,\tau_j^d)\})$ 
of $C_{d}(\tau_{1}, \ldots, \tau_{n})$ 
coincides with $|\tau_j-\tau_i|$, it follows immediately from 
\cite[Theorem 1.3 (b)]{Gubel}
that one has $\gamma_{d} \leq d (d + 1)$.  
In the present paper a new inequality $\gamma_{d} \leq d^{2} - 1$ is proved
(Theorem \ref{main1}).
Moreover, it is shown that  
if $d \geq 4$ with $\tau_{3} - \tau_{2} = 1$, then
$C_{d}(\tau_{1}, \ldots, \tau_{n})$ 
is not very ample (Theorem \ref{main2}).

A brief overview of the present paper is as follows. 
After preparing notation, terminologies together with several lemmata 
in Section $1$, a proof of Theorem \ref{main1}
is achieved in Section $2$.
Moreover, Section $3$ is devoted to showing 
Theorem \ref{main2}. 

Finally the study on an algebraic aspect of integral cyclic polytopes 
including toric rings of integral cyclic polytopes will be done 
in the forthcoming paper \cite{HHKO}.


\section{Preliminaries}

In this section, we prepare notation and lemmata for our main theorem. 

\smallskip

First of all, we will review some fundamental facts on cyclic polytopes.
Let $d$ and $n$ be positive integers with $n \geq d+1$. 
It is convenient to work with a homogeneous version of the cyclic polytopes, hence, 
throughout the present paper, we consider 
$C^{*}_d( \tau_1, \ldots, \tau_n)$ instead of $C_d( \tau_1, \ldots, \tau_n)$. 
For $n$ real numbers $\tau_1, \ldots, \tau_n$ with $\tau_1 < \cdots < \tau_n$, we set 
\[ 
v_i \defa (1, \tau_i, \tau_i^2, \ldots, \tau_i^d) \in \RR^{d+1} \; \text{ for } \; 1 \leq i \leq n. 
\]
In other words, $C^{*}_d( \tau_1, \ldots, \tau_n) = \con(\{ v_i : 1 \leq i \leq n \}) \subset \RR^{d+1}$. 
Unless stated otherwise, 
we will always assume the indices are ordered like $\tau_1 < \ldots < \tau_n$. 
See \cite[Chapter 0]{ziegler} for some basic properties of cyclic polytopes. 
We will use a well-known characterization of their facets. (See, e.g., \cite[Theorem 0.7]{ziegler}). 

Let $[n] \defa \set{1,\ldots,n}$ and 
let us say that a set $S \subset [n]$ {\em forms a facet} of $C^{*}_d( \tau_1, \ldots, \tau_n)$ 
if $\con(\set{v_i : i \in S})$ is its facet. 
\begin{Proposition}[Gale's evenness condition] \label{galesevenness}
A set $S \subset [n]$ with $d$ elements forms a facet of $C^{*}_d( \tau_1, \ldots, \tau_n)$ 
if and only if $S$ satisfies the following condition: 
If $i$ and $j$ with $i < j$ are not in $S$, then the number of elements of $S$ between $i$ and $j$ is even. 
In other words, 
\[ 2 \; \vrule\: \# \set{k \in S \with i < k < j}, \] 
where $\# X$ stands for the number of elements contained in a finite set $X$. 
\end{Proposition}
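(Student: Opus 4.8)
The plan is to reduce the combinatorial facet condition to a sign computation via the standard supporting-hyperplane description of facets. Since the $\tau_i$ are distinct, any $d+1$ of the vectors $v_i$ form a Vandermonde matrix of nonzero determinant and are therefore affinely independent; consequently $C^{*}_d(\tau_1,\ldots,\tau_n)$ is simplicial, every facet is the convex hull of exactly $d$ vertices, and every $d$-element set $S$ spans an affine hyperplane. Thus $S = \set{i_1 < \cdots < i_d}$ forms a facet precisely when the affine hyperplane $H$ through $v_{i_1},\ldots,v_{i_d}$ is a supporting hyperplane, that is, when all remaining vertices $v_k$ with $k \notin S$ lie strictly on one side of $H$.

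The first step is to write $H$ explicitly. A point $x = (1,x_1,\ldots,x_d)$ lies on $H$ if and only if
\[
f(x) \defa \det\begin{pmatrix} 1 & x_1 & \cdots & x_d \\ 1 & \tau_{i_1} & \cdots & \tau_{i_1}^{d} \\ \vdots & & & \vdots \\ 1 & \tau_{i_d} & \cdots & \tau_{i_d}^{d} \end{pmatrix} = 0 .
\]
I would then evaluate this linear functional at a vertex $v_k$ with $k \notin S$. Expanding along the first row identifies $f(v_k)$ with the $(d+1)\times(d+1)$ Vandermonde determinant in the values $\tau_k, \tau_{i_1}, \ldots, \tau_{i_d}$, which factors as
\[
f(v_k) = c \cdot \prod_{p=1}^{d}(\tau_k - \tau_{i_p}), \qquad c \defa (-1)^{d}\!\!\prod_{1 \leq p < q \leq d}(\tau_{i_q} - \tau_{i_p}),
\]
where $c$ is a nonzero constant independent of $k$. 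In particular $f(v_k) \neq 0$, so every vertex outside $S$ lies strictly off $H$, and the side on which $v_k$ falls is governed entirely by the sign of $\prod_{p=1}^{d}(\tau_k - \tau_{i_p})$.

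The key step is the parity count. Because $\tau_k - \tau_{i_p} < 0$ exactly when $i_p > k$, one has
\[
\sgn f(v_k) = \sgn(c)\cdot(-1)^{\#\set{l \in S \with l > k}} .
\]
Hence $S$ forms a facet if and only if $\sgn f(v_k)$ is constant over all $k \notin S$. Comparing two complementary indices $i < j$, the signs agree if and only if $\#\set{l \in S \with l > i} - \#\set{l \in S \with l > j}$ is even; since $j \notin S$, this difference equals $\#\set{l \in S \with i < l < j}$. Constancy of the sign across all $k \notin S$ is therefore exactly the requirement that $\#\set{l \in S \with i < l < j}$ be even whenever $i < j$ both lie outside $S$, which is Gale's evenness condition.

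I expect the main obstacle to be the bookkeeping rather than any conceptual difficulty: getting the Vandermonde factorization and the sign of $\prod_{p}(\tau_k - \tau_{i_p})$ correct, and isolating the fact that the global constant $c$ is irrelevant to the comparison. A secondary point requiring care is the general-position argument that legitimizes the equivalence \emph{facet} $\Longleftrightarrow$ \emph{supporting hyperplane through exactly $d$ vertices}; once the simpliciality is established from the Vandermonde nonvanishing, both implications of the biconditional follow directly from the sign criterion above.
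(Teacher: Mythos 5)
Your proof is correct, and it is essentially the argument the paper relies on: the paper itself gives no proof of this proposition, citing \cite[Theorem 0.7]{ziegler}, whose standard proof is exactly your Vandermonde sign computation, and the linear form the paper introduces immediately afterwards, $\sigma_S$ with coefficients from $\prod_{i \in S}(t - \tau_i)$, coincides with your functional $f$ up to the irrelevant nonzero constant $c$. In particular $\sigma_S(v_k) = \prod_{i \in S}(\tau_k - \tau_i)$ has sign $(-1)^{\#\set{l \in S \with l > k}}$, and its constancy over all $k \notin S$ is equivalent to Gale's evenness condition, just as you argue.
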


Hereafter, we will assume that $\tau_1,\ldots,\tau_n$ are integers. 

Let $\Delta_{ij} \defa \tau_j - \tau_i$ for $i,j \in [n]$. 
The proof of Proposition \ref{galesevenness} yields a description of 
the inequality of the supporting hyperplane defining each facet. 
Let $S=\set{k_1,\ldots,k_d} \subset [n]$ and consider the polynomial 
\[
\sum_{i=0}^d c_{S,i} t^i \defa \prod_{i \in S} (t - \tau_i) \,.
\]
Then all $d$ vectors $v_{k_1},\ldots,v_{k_d}$ vanish by the linear form 
\[ 
\sigma_S: \RR^{d+1} \ni (w_0,w_1,\dotsc,w_d) \mapsto \sum_{i=0}^d c_{S,i} w_i \in \RR \,,
\] 
thus it defines the hyperplane spanned by them. Note that we index the first coordinate by $0$.
Hence, if the set $S$ forms a facet $\Fc$ of $\Pc^*=C^{*}_d( \tau_1, \ldots, \tau_n)$, 
then $\sigma_S$ is the linear form defining $\Fc$, 
which means that $\sigma_S(x) \geq 0$ if $x$ is in $\Pc^*$ and $\sigma_S(x)=0$ if $x$ is in $\Fc$.  
For every $j \in [n] \setminus S$, it holds $\sigma_S(v_j) = \prod_{i \in S} \Delta_{ij}$. 
This has a useful implication, that is, if we write a vector $x \in \ZZ^{d+1}$ as 
$x = \sum_{i \in S} \lambda_i v_i + \lambda_j v_j$ with rational coefficients $\lambda_i$, 
then the denominator of $\lambda_j$ is a divisor of $\prod_{i \in S} \Delta_{ij}$, 
because $\sigma_S(x) = \lambda_j \prod_{i \in S} \Delta_{ij}$ is an integer.

We introduce a special representation of cyclic polytopes which is sometimes helpful. 
Write the vectors $v_1,\ldots,v_n$ as row vectors into a matrix, namely, 
\begin{equation}\label{eq:matrix}
\begin{pmatrix}
v_1 \\
v_2 \\
\vdots \\
v_n
\end{pmatrix}=
\begin{pmatrix}
1      &\tau_1&\tau_1^2& \dots &\tau_1^d \\
1      &\tau_2&\tau_2^2& \dots &\tau_2^d \\
\vdots &\vdots&        &       &\vdots   \\
1      &\tau_n&\tau_n^2& \dots &\tau_n^d \\
\end{pmatrix}. 
\end{equation}

\begin{Lemma}\label{lemma:delta}
The aforementioned matrix can be transformed to the following matrix 
by using a unimodular transformation: 
\begin{equation}\label{eg:matrixdelta}
\begin{pmatrix}
1& 0             & \cdots                        &\cdots & 0 \\
1&\Delta_{12}    &  0                            & \ddots& \vdots \\
1&\Delta_{13}    &  \Delta_{13}\Delta_{23}       & \ddots& \vdots \\
\vdots&\vdots    &               \vdots          & \ddots& 0 \\
1&\Delta_{1, d+1}&\Delta_{1, d+1}\Delta_{2, d+1} & \dots & \prod_{k=1}^d\Delta_{k, d+1} \\
\vdots&\vdots    &               \vdots          &       & \vdots \\
1&\Delta_{1, n}  &\Delta_{1, n}\Delta_{2, n}     & \dots & \prod_{k=1}^d\Delta_{k, n} \\
\end{pmatrix}.
\end{equation}
In particular, the convex hull of the row vectors of this matrix is unimodularly equivalent to 
$C^{*}_d( \tau_1, \ldots, \tau_n)$. 
\end{Lemma}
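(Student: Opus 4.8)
The plan is to realize the target matrix \eqref{eg:matrixdelta} as the right-multiplication of the Vandermonde matrix \eqref{eq:matrix} by a single unimodular matrix arising from a change of polynomial basis. The key observation is that column $k$ (indexed $0 \le k \le d$) of \eqref{eq:matrix} records the monomial $t^k$ evaluated at the points $\tau_1, \ldots, \tau_n$, whereas column $k$ of \eqref{eg:matrixdelta} records the Newton polynomial $p_k(t) \defa \prod_{j=1}^{k}(t-\tau_j)$ evaluated at the same points; indeed $p_k(\tau_i) = \prod_{j=1}^{k}(\tau_i - \tau_j) = \prod_{j=1}^{k}\Delta_{ji}$, which is exactly the $(i,k)$ entry of \eqref{eg:matrixdelta} (and automatically vanishes once $k \ge i$, giving the displayed lower-triangular shape).

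First I would set $p_0 \defa 1$ and, for $1 \le k \le d$, write $p_k(t) = \sum_{\ell=0}^{k} u_{\ell k}\, t^\ell$. Since each $p_k$ is monic of degree $k$, the matrix $U \defa (u_{\ell k})_{0 \le \ell, k \le d}$ is upper triangular with $1$'s on the diagonal; because the $\tau_j$ are integers, every coefficient $u_{\ell k}$ lies in $\ZZ$. Hence $U \in \mathrm{GL}_{d+1}(\ZZ)$, i.e. $U$ is unimodular. Denoting by $M$ the matrix \eqref{eq:matrix}, a direct computation gives $(MU)_{ik} = \sum_{\ell=0}^{d} \tau_i^\ell\, u_{\ell k} = p_k(\tau_i)$, so $MU$ is precisely \eqref{eg:matrixdelta}.

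Finally, right-multiplication by the unimodular matrix $U$ is a unimodular transformation of $\RR^{d+1}$ carrying the lattice $\ZZ^{d+1}$ onto itself and sending each vertex $v_i$ to the corresponding row of \eqref{eg:matrixdelta}; it therefore maps $C^{*}_d(\tau_1, \ldots, \tau_n) = \con(\set{v_i : 1 \le i \le n})$ onto the convex hull of the rows of \eqref{eg:matrixdelta}, establishing the claimed unimodular equivalence. There is no serious obstacle here: the only point requiring care is the integrality and unitriangularity of $U$, which rests squarely on the hypothesis $\tau_1, \ldots, \tau_n \in \ZZ$ together with the fact that the change between two monic integer bases of the same degrees is automatically given by an integer matrix of determinant $1$.
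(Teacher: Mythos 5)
Your proof is correct and is essentially the paper's own argument made explicit: the paper merely remarks that the lemma follows ``as in the well-known proof of the Vandermonde determinant,'' i.e., by the integer column operations that you package into the single unitriangular matrix $U$ coming from the change from the monomial basis to the Newton basis $p_k(t)=\prod_{j=1}^{k}(t-\tau_j)$. Your verification that $U$ is integer, unitriangular (hence unimodular) and that $p_k(\tau_i)=\prod_{j=1}^{k}\Delta_{ji}$ reproduces the entries of \eqref{eg:matrixdelta} is precisely the detail the paper leaves to the reader, so nothing further is needed.
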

A proof of the above lemma is essentially the same as a proof of the well-known Vandermonde determinant identity. 
Note that Lemma \ref{lemma:delta} is valid for any ordering of the parameters $\tau_1,\dotsc,\tau_n$, 
i.e., any ordering of $v_1,\ldots,v_n$.

Let us identify a special case where the polytopes are indeed unimodularly equivalent. 
\begin{Lemma}\label{equiv}
An integral cyclic polytope $C^{*}_d(\tau_1,\ldots,\tau_d)$ is unimodularly equivalent to 
$C^{*}_d(-\tau_n, \ldots, -\tau_1)$. Moreover, for any integer $m$, 
$C^{*}_d(\tau_1,\ldots,\tau_d)$ is unimodularly equivalent to $C^{*}_d(\tau_1 + m , \ldots, \tau_n + m)$. 
\end{Lemma}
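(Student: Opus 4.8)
The plan is to prove both statements by exhibiting explicit unimodular linear transformations of $\RR^{d+1}$ that carry one vertex set onto the other. Recall that two integral polytopes are unimodularly equivalent once we produce a matrix $U \in GL_{d+1}(\ZZ)$ (composed possibly with an integer translation) mapping the vertices of the first bijectively onto those of the second; since each $C^*_d$ is the convex hull of its $v_i$, and a unimodular map sends convex hulls to convex hulls, it suffices to check the action on the vectors $v_i$.

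For the reflection statement, I would use the diagonal matrix $D = \operatorname{diag}(1,-1,1,\ldots,(-1)^d)$. Since $(-\tau_i)^k = (-1)^k \tau_i^k$, one checks directly that $D v_i = (1, -\tau_i, (-\tau_i)^2, \ldots, (-\tau_i)^d)$, which is exactly the vertex of $C^*_d(-\tau_n,\ldots,-\tau_1)$ associated with the parameter $-\tau_i$. Because $\tau_1 < \cdots < \tau_n$ forces $-\tau_n < \cdots < -\tau_1$, the image $\set{D v_i : 1 \le i \le n}$ is precisely the full vertex set of the reflected polytope. As $D$ is a diagonal $\pm 1$ matrix, it lies in $GL_{d+1}(\ZZ)$, so it is unimodular.

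For the translation statement, I would take the lower-triangular matrix $U = (u_{kj})_{0 \le k,j \le d}$ defined by $u_{kj} = \binom{k}{j} m^{k-j}$ for $j \le k$ and $u_{kj} = 0$ otherwise. The binomial theorem gives $\sum_{j=0}^{k} \binom{k}{j} m^{k-j} \tau_i^j = (\tau_i + m)^k$, so the $k$-th coordinate of $U v_i$ equals $(\tau_i + m)^k$; that is, $U v_i$ is the vertex of $C^*_d(\tau_1+m,\ldots,\tau_n+m)$ with parameter $\tau_i + m$. The ordering of the parameters is again preserved, so $U$ maps $\set{v_i : 1 \le i \le n}$ onto the full vertex set of the translated polytope.

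The only point requiring care, and the closest thing to an obstacle, is confirming that $U$ is unimodular. This is immediate from its shape: $U$ is lower triangular with diagonal entries $u_{kk} = \binom{k}{k} m^0 = 1$, hence $\det U = 1$; and since $m \in \ZZ$, every entry $\binom{k}{j} m^{k-j}$ is an integer, so $U \in GL_{d+1}(\ZZ)$. Both transformations fix the first coordinate, hence preserve the hyperplane $\set{w_0 = 1}$ in which the polytopes lie, and being in $GL_{d+1}(\ZZ)$ they preserve the lattice $\ZZ^{d+1}$; this completes the verification.
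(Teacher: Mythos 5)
Your proof is correct, and half of it diverges from the paper in an interesting way. For the reflection $\tau_i \mapsto -\tau_i$ your argument is essentially identical to the paper's: the authors also observe that negating the parameters amounts to multiplying each odd-exponent column of the vertex matrix by $-1$, which is exactly your diagonal matrix $\operatorname{diag}(1,-1,\ldots,(-1)^d) \in GL_{d+1}(\ZZ)$. For the translation $\tau_i \mapsto \tau_i + m$, however, the paper does not write down any matrix at all: it invokes Lemma \ref{lemma:delta}, which says every $C^*_d(\tau_1,\ldots,\tau_n)$ is unimodularly equivalent to a canonical lower-triangular form whose entries are products of the differences $\Delta_{ij}$; since translation leaves all $\Delta_{ij}$ unchanged, the original and translated polytopes share the same canonical form and are therefore equivalent to each other. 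Your route instead exhibits the explicit Pascal-type matrix $u_{kj} = \binom{k}{j}m^{k-j}$ realizing $(\tau_i+m)^k = \sum_{j \le k}\binom{k}{j}m^{k-j}\tau_i^j$, with unimodularity immediate from the unit diagonal (and invertibility over $\ZZ$ visible since the inverse is the same matrix with $-m$). Your argument is more self-contained --- it does not depend on Lemma \ref{lemma:delta} and its Vandermonde-style reduction --- and it produces a concrete transformation, which is occasionally useful; the paper's argument buys brevity and reuses machinery it needs anyway, and its invariance-of-differences viewpoint explains conceptually \emph{why} translation invariance holds. Both proofs are complete; your care in checking that the maps fix the $w_0$-coordinate hyperplane and preserve $\ZZ^{d+1}$ is exactly the right finishing step.
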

\begin{proof}
The replacement $\tau_i \mapsto - \tau_i$ corresponds to a multiplication with $-1$ 
in every column of \eqref{eq:matrix} with an odd exponent. This is a unimodular transformation.
The second statement is immediate from Lemma \ref{lemma:delta}, 
because the matrix \eqref{eg:matrixdelta} depends only on the differences $\Delta_{ij} = \tau_j - \tau_i$.
\end{proof}

We define a certain class of vectors which we will use in the sequel. 
Let $S=\{i_1,\ldots,i_q\} \subset [n]$ be a non-empty set, where $i_1< \cdots <i_q$. 
Then we define 
\[ \bvec{S} \defa \sum_{i\in S} \frac{1}{\prod_{j \in S \setminus \{i\}} \Delta_{ij}} v_i 
=\sum_{k=1}^q \frac{(-1)^{k+1}}{\prod_{j \in S \setminus \{i_k\}} |\Delta_{i_kj}|}v_{i_k}, \]
where $\bvec{S}=v_{i_1}$ when $q=1$, i.e., $\# S=1$. 
If $S$ is small, we will sometimes omit the brackets around the elements, 
thus we write, for example, $\bvec{ij} = \bvec{\set{i,j}}$. 
However, the vector does not depend on the order of the indices.
\begin{Example}{\em 
Let us write down $\bvec{S}$'s for small sets $S$. Assume $1\leq i<j<k<l\leq n$. Then 
\begin{align*}
\bvec{i} &= v_i, \\
\bvec{ij} &= \frac{1}{\Delta_{ij}} v_i - \frac{1}{\Delta_{ij}} v_j, \\
\bvec{ijk} &= \frac{1}{\Delta_{ij}\Delta_{ik}} v_i - \frac{1}{\Delta_{ij}\Delta_{jk}} v_j 
+ \frac{1}{\Delta_{ik}\Delta_{jk}} v_k, \\
\bvec{ijkl} &= \frac{1}{\Delta_{ij}\Delta_{ik}\Delta_{il}} v_i - \frac{1}{\Delta_{ij}\Delta_{jk}\Delta_{jl}} v_j 
+ \frac{1}{\Delta_{ik}\Delta_{jk}\Delta_{kl}} v_k - \frac{1}{\Delta_{il}\Delta_{jl}\Delta_{kl}} v_l. 
\end{align*}
The sign changes are due to a reordering of the indices since $\Delta_{ij} = - \Delta_{ji}$. 
If $v_i,v_j,v_k,v_l$ are given in the form \eqref{eg:matrixdelta}, i.e., if 
\begin{equation*}
\begin{pmatrix}
v_i \\
v_j \\
v_k \\
v_l
\end{pmatrix}=
\begin{pmatrix}
1&0           &\cdots                 &\cdots                            &\cdots &\cdots &0 \\
1&\Delta_{ij} &0                      &\ddots                            &\cdots &\cdots &\vdots \\
1&\Delta_{ik} &\Delta_{ik}\Delta_{jk} &\ddots                            &\cdots &\cdots &\vdots \\
1&\Delta_{il} &\Delta_{il}\Delta_{jl} &\Delta_{il}\Delta_{jl}\Delta_{kl} &0      &\cdots &0\\
\end{pmatrix}, 
\end{equation*}
then $\bvec{i}=(1,0,\ldots,0)$, $\bvec{ij}=(0,-1,0,\ldots,0)$, $\bvec{ijk}=(0,0,1,0,\ldots,0)$ 
and $\bvec{ijkl}=(0,0,0,-1,0,\ldots,0)$. 
In general, $\bvec{1}, \bvec{12}, \ldots, \bvec{12 \cdots d+1}$ look like 
$(0,\ldots,0,\pm 1,0,\ldots,0)$ when $v_1,\ldots,v_{d+1}$ are of the form \eqref{eg:matrixdelta}. 
}\end{Example} 
\noindent The following proposition collects the basic properties on these vectors. 
\begin{Proposition}\label{prop:bvectors}
\begin{enumerate}
\item For any non-empty set $S \subset [n]$, one has $\bvec{S} \in \ZZ^{d+1}$. 
\item Let $S \subset [n]$ and $a,b\in S$ with $a\neq b$. Then we have a recursion formula 
\[ \bvec{S} = \frac{1}{\Delta_{ba}} \bvec{S\setminus \{a\}} + \frac{1}{\Delta_{ab}} \bvec{S\setminus \{b\}}. \]
\item For any distinct $d+1$ indices $i_1,\dotsc,i_{d+1} \in [n]$ (not necessarily ordered), the vectors
\[ \bvec{i_1}, \bvec{i_1 i_2}, \bvec{i_1 i_2 i_3}, \ldots, \bvec{i_1 \cdots i_{d+1}} \]
form a $\ZZ$-basis for $\ZZ^{d+1}$. 
\item If $\# S \geq d+2$, then $\bvec{S}=0$.
\end{enumerate}
\end{Proposition}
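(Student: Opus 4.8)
The plan is to reduce the whole proposition to one computation of the coordinates of $\bvec{S}$. Writing $q = \# S$ and unravelling the definition, the $m$-th coordinate (with $m=0,\dots,d$) of $\bvec{S}$ is
\[
(\bvec{S})_m = \sum_{i\in S}\frac{\tau_i^m}{\prod_{j\in S\setminus\{i\}}(\tau_j-\tau_i)} = (-1)^{q-1}\sum_{i\in S}\frac{\tau_i^m}{\prod_{j\in S\setminus\{i\}}(\tau_i-\tau_j)},
\]
the sign coming from the $q-1$ factors in each denominator. I would recognise the last sum as the coefficient of $t^{q-1}$ in the Lagrange interpolation polynomial of $f(t)=t^m$ at the $q$ distinct nodes $\set{\tau_i : i\in S}$, since the Lagrange basis polynomial attached to $\tau_i$ has leading coefficient $1/\prod_{j\neq i}(\tau_i-\tau_j)$. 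That interpolant has degree $\le q-1$, so for $m<q-1$ it coincides with $f$ itself and its coefficient of $t^{q-1}$ is $0$, while for $m=q-1$ it equals the leading coefficient $1$. Hence
\[
(\bvec{S})_m = 0 \ \text{ for } m<q-1, \qquad (\bvec{S})_{q-1} = (-1)^{q-1}.
\]
This identity is the engine of everything that follows (equivalently one identifies $(\bvec{S})_m$ with $(-1)^{q-1}$ times the complete homogeneous symmetric polynomial $h_{m-q+1}$ in the $\tau_i$, $i\in S$).

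From it, statement (4) is immediate: if $\# S\ge d+2$ then $q-1\ge d+1>m$ for every $m\in\set{0,\dots,d}$, so every coordinate $(\bvec{S})_m$ vanishes and $\bvec{S}=0$. For statement (3), put $b_k=\bvec{\set{i_1,\dots,i_k}}$; the identity shows that $b_k$ has its first nonzero entry in position $k-1$, equal to $(-1)^{k-1}$. Thus the matrix with rows $b_1,\dots,b_{d+1}$ is triangular with diagonal entries $\pm 1$, so its determinant is $\pm 1$ and its rows form a $\ZZ$-basis of $\ZZ^{d+1}$. Alternatively, one may reorder $i_1,\dots,i_{d+1}$ into the first $d+1$ rows, bring them into the form \eqref{eg:matrixdelta} by the unimodular transformation of Lemma \ref{lemma:delta} (valid for any ordering), and simply read off from the Example preceding this proposition that the $b_k$ become $\pm$ unit vectors; unimodular invariance then gives the $\ZZ$-basis property.

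Statement (1) now splits into two cases. If $\# S\le d+1$, I would extend $S$ to a set of $d+1$ distinct indices $\set{i_1,\dots,i_{d+1}}$ with $S=\set{i_1,\dots,i_q}$ (possible because $n\ge d+1$); then by (3) the vector $\bvec{S}=b_q$ lies in a $\ZZ$-basis of $\ZZ^{d+1}$, hence is integral. If $\# S\ge d+2$, then $\bvec{S}=0\in\ZZ^{d+1}$ by (4). Finally, statement (2) is an independent and purely formal manipulation: comparing the coefficient of each $v_i$ on both sides and using $\Delta_{ab}=-\Delta_{ba}$ together with $\Delta_{ia}-\Delta_{ib}=\Delta_{ba}$, one checks that the coefficients of $v_i$ for $i\neq a,b$ match after combining the two fractions, while for $i=a$ and $i=b$ a single factor is absorbed into $\Delta_{ab}$ respectively $\Delta_{ba}$.

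The main obstacle is the coordinate identity in the first paragraph: the vanishing of the interpolation coefficient for $m<q-1$ and its value $\pm1$ at $m=q-1$ must be pinned down carefully, as this is what forces the triangular shape in (3) and the vanishing in (4). Once it is in place the remainder is bookkeeping, the only other points needing care being the handling of the unordered index sets in (3) and the case split used to deduce (1) from (3) and (4).
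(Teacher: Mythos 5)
Your argument is correct in substance, and it takes a genuinely different route from the paper's. The paper never writes a closed form for the coordinates of $\bvec{S}$: it proves (2) by direct computation, then proves (1) and (4) simultaneously by induction on $\# S$, viewing the recursion as a divided-difference step (the quotient of the difference of two polynomial components by $\tau_a-\tau_b$ is again a polynomial, symmetric in the $\tau_i$ and of degree one less, so all components die once $\# S\geq d+2$), and it proves (3) by a lattice descent: writing $x\in\ZZ^{d+1}$ as $\sum_j\lambda_j v_{i_j}$, using the linear form $\sigma_{\set{i_1,\ldots,i_d}}$ to see that the denominator of $\lambda_{i_{d+1}}$ divides $\prod_{j=1}^d\Delta_{i_j i_{d+1}}$, subtracting an integer multiple of $\bvec{i_1\cdots i_{d+1}}$, and inducting on a $(d-1)$-dimensional cyclic polytope. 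Your identity $(\bvec{S})_m=(-1)^{q-1}h_{m-q+1}(\tau_i: i\in S)$ compresses all of this into one formula: (4) becomes the same degree count, while (3) reduces to a triangular integer matrix with diagonal entries $\pm1$, which is shorter and more transparent than the paper's descent. What the paper's route exercises instead is the facet-form machinery $\sigma_S$ and the control of denominators of barycentric coordinates, which the paper reuses later (notably in the proof of Proposition \ref{non-normal4}); your route is self-contained linear algebra.

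One organizational flaw should be repaired: as written, your deduction of (1) from (3) is circular, because your triangularity proof of (3) concludes that the rows form a $\ZZ$-basis of $\ZZ^{d+1}$ only if those rows are already known to lie in $\ZZ^{d+1}$ --- which is statement (1). The repair is contained in your own parenthetical: since complete homogeneous symmetric polynomials have integer coefficients, the identity $(\bvec{S})_m=(-1)^{q-1}h_{m-q+1}$ yields (1) outright for all $S$, after which (3) follows exactly as you say; alternatively, your second route for (3), via Lemma \ref{lemma:delta} and the Example, gives integrality for free, since unimodular transformations preserve $\ZZ^{d+1}$. Note also that your interpolation argument fully establishes the coordinate values only for $m\leq q-1$ (vanishing below the diagonal, $\pm1$ on it); the value $h_{m-q+1}$ for $m\geq q$, on which the integrality of the above-diagonal entries rests, is a standard divided-difference identity that you quote rather than prove. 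If you want the write-up self-contained, it follows by induction on $\# S$ from your own recursion (2) --- which is, in essence, how the paper proves (1).
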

\begin{proof}
The second statement can be verified by elementary computations, 
using $\Delta_{ij} + \Delta_{jk} = \Delta_{ik}$ for $i,j,k\in [n]$.
To prove the first statement, we consider the components of $b_S$ 
as rational functions in $\tau_i, i\in S$. By induction on $\# S$, 
we prove the following statement. The components of $b_S$ are symmetric polynomials in $\tau_i, i\in S$, 
and their coefficients depend only on $\# S$.

If $\# S=1$, then $\bvec{S}=\bvec{i} = v_i = (1,\tau_i, \tau_i^2, \dotsc, \tau_i^d)$, 
thus the claim holds. Now consider a set $S$ with at least two distinct elements $a,b$. Let 
\[ f_j(\tau_a, \tau_i, i\in S), \; f_j(\tau_b, \tau_i, i\in S) \] 
be the $j$-th components of $\bvec{S\setminus b}$, $\bvec{S\setminus a}$, respectively. 
Then the difference between these polynomials is zero if we set $\tau_a = \tau_b$, hence the quotient
\[ \frac{f_j(\tau_a, \tau_i, i\in S) - f_j(\tau_b, \tau_i, i\in S)}{\tau_a - \tau_b} \]
is a polynomial as claimed. It is obviously symmetric in $a$ and $b$. 
Since we are free to choose any two elements of $S$, it is symmetric in all variables. 
The coefficients of the polynomial depend only on $\# S$, so the claim is proven. 
Note that the degree of the polynomial decreases by one by taking the quotient. 
Since the degree of the components of $v_i$ is at most $d+1$, we conclude that $b_S = 0$ for $\#S \geq d+2$.

To prove the third statement, we first note that 
the vertices $v_{i_1}, \dotsc, v_{i_{d+1}}$ are linearly independent. 
Take an element $x \in \ZZ^{d+1}$ and write it as $x = \sum \lambda_j v_{i_j}$. 
By considering $\sigma_{\set{i_1,\ldots,i_d}}(x)$, we can say that 
the coefficient $\lambda_{i_{d+1}}$ is of the form 
\[\lambda_{i_{d+1}} = \frac{k}{\prod_{j=1}^d \Delta_{i_j i_{d+1}}} \]
for an integer $k$. Thus, $x +(-1)^d k \bvec{i_1\dotsc i_{d+1}} \in \ZZ^{d+1}$ 
is a vector in the subspace spanned by $v_{i_1},\dotsc,v_{i_d}$. 
These vectors define a $(d-1)$-dimensional cyclic polytope again, 
so we can proceed by induction and obtain a representation of $x$ 
as a $\ZZ$-linear combination of the $\bvec{i_1},\bvec{i_1i_2},\dotsc,\bvec{i_1\dotsc i_{d+1}}$.
\end{proof}
\noindent We apply this construction to prove another useful fact on cyclic polytopes. 
\begin{Lemma}\label{lemma:basis}
For an integral cyclic polytope $\Pc \subset \RR^d$ of dimension $d$, 
one has 
\[\ZZ \Ac_{\Pc}=\ZZ^{d+1}\,.\] 
\end{Lemma}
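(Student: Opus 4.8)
The plan is to deduce this directly from part (3) of Proposition~\ref{prop:bvectors}, which is the key tool already prepared for exactly this purpose. Recall that $\Ac_{\Pc} = \Pc^* \cap \ZZ^{d+1}$ is the set of integer points in the homogenized polytope $\Pc^* = C^*_d(\tau_1,\ldots,\tau_n)$, so in particular each vertex $v_i = (1,\tau_i,\ldots,\tau_i^d)$ lies in $\Ac_{\Pc}$. Since $\Pc$ has dimension $d$, the polytope $\Pc^*$ spans $\RR^{d+1}$, hence $n \geq d+1$ and we may select $d+1$ of the vertices, say $v_1,\ldots,v_{d+1}$.

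First I would invoke Proposition~\ref{prop:bvectors}(3) with the indices $1,2,\ldots,d+1$: the vectors $\bvec{1}, \bvec{12}, \ldots, \bvec{12\cdots d+1}$ form a $\ZZ$-basis of $\ZZ^{d+1}$. The point of this step is that each of these basis vectors is, by its very definition, a $\QQ$-linear combination of the $v_i$, and more importantly, by Proposition~\ref{prop:bvectors}(1) each $\bvec{S}$ is itself an integer vector. Next I would observe that every $\bvec{S}$ for $S \subseteq \set{1,\ldots,d+1}$ lies in $\ZZ\Ac_{\Pc}$: indeed $\bvec{S}$ is a rational combination of the $v_i \in \Ac_{\Pc}$, but since $\bvec{S} \in \ZZ^{d+1}$ the recursion formula in Proposition~\ref{prop:bvectors}(2), together with the base case $\bvec{i} = v_i \in \Ac_{\Pc} \subseteq \ZZ\Ac_{\Pc}$, shows by induction on $\#S$ that $\bvec{S}$ is an integer combination of the $\bvec{S'}$ with smaller $S'$ and ultimately an element of $\ZZ\Ac_{\Pc}$. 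Concretely, the recursion $\bvec{S} = \tfrac{1}{\Delta_{ba}}\bvec{S\setminus\set{a}} + \tfrac{1}{\Delta_{ab}}\bvec{S\setminus\set{b}}$ expresses the integer vector $\bvec{S}$ in terms of two previously constructed integer vectors, and since $\bvec{S}\in\ZZ^{d+1}$ its coordinates are integers, placing it in the $\ZZ$-span of $\Ac_{\Pc}$.

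The conclusion is then immediate: since $\bvec{1},\ldots,\bvec{12\cdots d+1}$ generate $\ZZ^{d+1}$ over $\ZZ$ and all of them lie in $\ZZ\Ac_{\Pc}$, we get $\ZZ^{d+1} \subseteq \ZZ\Ac_{\Pc}$. The reverse inclusion $\ZZ\Ac_{\Pc} \subseteq \ZZ^{d+1}$ is trivial because $\Ac_{\Pc} \subseteq \ZZ^{d+1}$ by definition. Hence $\ZZ\Ac_{\Pc} = \ZZ^{d+1}$, as claimed.

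The only subtle point, and the place I would be most careful, is the inductive argument that each $\bvec{S}$ genuinely sits in the integer span $\ZZ\Ac_{\Pc}$ rather than merely in the rational span. The recursion in Proposition~\ref{prop:bvectors}(2) a priori introduces denominators $\Delta_{ab}$, so one cannot simply read off integrality of the combination from the formula; the essential input is Proposition~\ref{prop:bvectors}(1), which guarantees $\bvec{S}\in\ZZ^{d+1}$ \emph{independently}, and this is what licenses treating $\bvec{S}$ as a legitimate member of $\ZZ\Ac_{\Pc}$ once we know it is an integer vector lying in $\QQ\Ac_{\Pc}$. I expect this interplay between parts (1) and (2) to be the heart of the matter, while everything else is formal.
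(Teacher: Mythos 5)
There is a genuine gap, and it sits exactly where you flagged the ``subtle point'': the claim that an integer vector lying in the rational span of $\Ac_{\Pc}$ must belong to $\ZZ\Ac_{\Pc}$. That inference is precisely the lemma itself. Indeed, since $\Pc$ has dimension $d$, the vertices $v_i$ span $\RR^{d+1}$ linearly, so $\QQ\Ac_{\Pc} = \QQ^{d+1}$ and ``integer vector in $\QQ\Ac_{\Pc}$'' says nothing more than ``integer vector''; your licensing principle thus reads ``every element of $\ZZ^{d+1}$ lies in $\ZZ\Ac_{\Pc}$,'' which is the inclusion to be proved. In general the principle is false: for $\Ac = \set{2e_1} \subset \ZZ$ the vector $e_1$ is integral and lies in $\QQ\Ac$, yet $e_1 \notin \ZZ\Ac = 2\ZZ$. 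The whole content of Lemma \ref{lemma:basis} is to rule out that $\ZZ\Ac_{\Pc}$ is a proper finite-index sublattice of $\ZZ^{d+1}$, and nothing in your argument does so: the recursion of Proposition \ref{prop:bvectors}(2) has the non-integral coefficients $1/\Delta_{ab}$, so induction only places $\bvec{S}$ in $\QQ\Ac_{\Pc}$, and Proposition \ref{prop:bvectors}(1) only certifies $\bvec{S} \in \ZZ^{d+1}$ --- neither fact, nor their conjunction, puts $\bvec{S}$ in the group generated by $\Ac_{\Pc}$. Note also that for $\#S \geq 2$ the first coordinate of $\bvec{S}$ is $0$, so these vectors are never themselves elements of $\Ac_{\Pc} = \Pc^* \cap \ZZ^{d+1}$; membership in $\ZZ\Ac_{\Pc}$ would have to be earned some other way.

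The paper supplies exactly the missing device. Instead of working with the $\bvec{S}$ directly, it forms the partial sums $c_j = \sum_{l=j}^{d+1} \bvec{i_l \cdots i_{d+1}}$ for $j = 1, \ldots, d+1$. These still constitute a $\ZZ$-basis of $\ZZ^{d+1}$ (the change of basis from Proposition \ref{prop:bvectors}(3) is triangular with $\pm 1$ diagonal), but --- and this is the key step absent from your proposal --- each $c_j$ actually lies in $\Pc^*$: for each vertex $v_{i_k}$, its coefficient first appears in $\bvec{i_k \cdots i_{d+1}}$ with positive sign, and thereafter alternates in sign with non-increasing absolute value because the denominators grow, so the telescoped coefficient in $c_j$ is nonnegative; since the coefficients sum to $1$ (the first coordinate of $c_j$ is $1$), $c_j$ is a convex combination of vertices. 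Hence $c_j \in \Pc^* \cap \ZZ^{d+1} = \Ac_{\Pc}$, and a $\ZZ$-basis of $\ZZ^{d+1}$ inside $\Ac_{\Pc}$ itself gives $\ZZ^{d+1} \subseteq \ZZ\Ac_{\Pc}$ legitimately. (Only after the lemma is proved can one deduce, e.g.\ from $\bvec{i_j \cdots i_{d+1}} = c_j - c_{j+1}$, that each $\bvec{S}$ lies in $\ZZ\Ac_{\Pc}$ --- the fact you wanted cannot serve as an intermediate step.) To repair your write-up, replace the circular step by this convexity argument, or by any other construction of generators of $\ZZ^{d+1}$ that are manifestly $\ZZ$-combinations of points of $\Ac_{\Pc}$.
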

\begin{proof}
First, we notice that $\ZZ \Ac_{\Pc} \subset \ZZ^{d+1}$ is obvious. 
To prove another inclusion, we construct a basis of $\ZZ^{d+1}$ from $d+1$ points in $\Ac_\Pc$.
We choose $d+1$ vertices $v_1,\dotsc,v_{d+1}$ of $\Pc^*$ and consider the vectors 
\[ \bvec{i_{d+1}}, \bvec{i_{d+1}} + \bvec{i_{d}i_{d+1}},\bvec{i_{d+1}} + \bvec{i_{d}i_{d+1}} + \bvec{i_{d-1}i_{d}i_{d+1}},\dotsc,
\sum_{l=1}^{d+1} \bvec{i_l\dotsc i_{d+1}}.\] 
Let us denote them by $c_j \defa \sum_{l=j}^{d+1} b_{i_l\dotsc i_{d+1}}$ for $j=1,\ldots,d+1$. 
By Proposition \ref{prop:bvectors} (3), they constitute a $\ZZ$-basis of $\ZZ^{d+1}$. 
Hence, if each $c_j$ is contained in $\Pc^{*}$, then our claim follows. 
For this, let us consider the coefficient of a vertex $v_{i_k}$ in the sequence of 
\[ \bvec{i_{d}},\bvec{i_{d}i_{d+1}}, \bvec{i_{d-1}i_{d}i_{d+1}}, \dotsc, \bvec{i_1\dotsc i_{d+1}}.\]
The coefficient of $v_{i_k}$ appears first in $\bvec{i_k\dotsc i_{d+1}}$, where it has a positive sign. 
After that, its sign is alternating and the absolute value is non-increasing since the denominators increase. 
Hence, the sum of those coefficients and thus the coefficient in $c_j$ is nonnegative. 
So, $c_j$ is a convex combination of the vertices of $\Pc^*$.
\end{proof}

Finally, we discuss the normality of integral cyclic polytopes. 
\begin{Lemma}\label{covering}
Let $\Pc$ be an integral cyclic polytope of dimension $d$. 
If any simplex of dimension $d$ whose vertices are chosen from those of $\Pc$ is normal, 
then $\Pc$ itself is also normal. 
\end{Lemma}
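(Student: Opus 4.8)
The plan is to reduce the normality of $\Pc$ to the integer decomposition property and then to propagate that property from a covering of $\Pc$ by lattice simplices. First I would record that normality and the integer decomposition property coincide for every integral cyclic polytope of dimension $d$: this is exactly the consequence of Lemma \ref{lemma:basis} noted in the Introduction, since Lemma \ref{lemma:basis} supplies the hypothesis $\ZZ(\Pc^* \cap \ZZ^{d+1}) = \ZZ^{d+1}$. The point is that this equivalence applies not only to $\Pc$ but also to each $d$-dimensional simplex $\Delta$ whose vertices are vertices of $\Pc$. Indeed, such a $\Delta$ is the convex hull of $d+1$ of the defining points of $\Pc$, hence equals $C_d(\tau_{i_1}, \ldots, \tau_{i_{d+1}})$ for a subset of the integer parameters, so $\Delta$ is itself an integral cyclic polytope of dimension $d$ and Lemma \ref{lemma:basis} applies to it as well. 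Thus the hypothesis that every such $\Delta$ is normal is equivalent to saying that every such $\Delta$ has the integer decomposition property, and it suffices to prove that $\Pc$ has the integer decomposition property.

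Next I would produce a covering of $\Pc$ by $d$-simplices spanned by vertices of $\Pc$. Since $\Pc$ is simplicial, fixing one vertex $v$ and coning it over every facet of $\Pc$ not containing $v$ yields finitely many $d$-dimensional simplices $\Delta_1, \ldots, \Delta_s$, each spanned by $d+1$ vertices of $\Pc$ and with $\Pc = \bigcup_{j=1}^s \Delta_j$. Any triangulation of $\Pc$ using only its own vertices would serve equally well; only the covering property $\Pc = \bigcup_j \Delta_j$ will be used in the sequel. By the previous paragraph each $\Delta_j$ has the integer decomposition property.

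Finally I would run the covering argument. Fix $m \geq 1$ and a lattice point $\alpha \in m\Pc \cap \ZZ^d$. Then $\frac{1}{m}\alpha \in \Pc = \bigcup_j \Delta_j$, so $\frac{1}{m}\alpha \in \Delta_j$ for some $j$, and hence $\alpha \in m\Delta_j \cap \ZZ^d$. Applying the integer decomposition property of $\Delta_j$ gives $\alpha = \alpha_1 + \cdots + \alpha_m$ with $\alpha_1, \ldots, \alpha_m \in \Delta_j \cap \ZZ^d \subseteq \Pc \cap \ZZ^d$, which is exactly the decomposition required for $\Pc$. Therefore $\Pc$ has the integer decomposition property, and by the equivalence above it is normal.

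The engine of the proof is the elementary observation that the integer decomposition property passes upward along a covering by subpolytopes that each have the property, via the rescale-locate-decompose step above; producing the covering is routine given simpliciality. Consequently the only part that needs genuine care — and the step I expect to be the main obstacle to writing cleanly — is the bookkeeping that makes normality interchangeable with the integer decomposition property for $\Pc$ \emph{and} for each sub-simplex simultaneously, since it is precisely this interchange that lets the hypothesis about normal simplices feed into the covering argument.
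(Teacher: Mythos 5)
Your proof is correct, and its engine is the same covering idea as the paper's, but the implementation is genuinely different in two respects. The paper stays in the cone language throughout: given $x \in \ZZ\Ac_\Pc \cap \QQ_{\geq 0}\Ac_\Pc$, it invokes Carath\'{e}odory's Theorem to select, per point, $d+1$ vertices $v_{i_1},\dotsc,v_{i_{d+1}}$ with $x \in \QQ_{\geq 0}\Ac_\Qc$ for $\Qc = \con(\set{v_{i_1},\ldots,v_{i_{d+1}}})$, uses Lemma \ref{lemma:basis} to identify $\ZZ\Ac_\Qc = \ZZ^{d+1} = \ZZ\Ac_\Pc$ (so that $x \in \ZZ\Ac_\Qc$), and concludes $x \in \ZZ_{\geq 0}\Ac_\Qc \subseteq \ZZ_{\geq 0}\Ac_\Pc$ from the normality of $\Qc$. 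You instead fix once and for all an explicit covering of $\Pc$ by the $d$-simplices obtained by coning a vertex over the facets avoiding it, translate everything into the integer decomposition property, and push that property through the covering; your rescale--locate step ($\frac{1}{m}\alpha \in \Pc$ lands in some simplex of the covering, hence $\alpha$ lies in its $m$-th dilate) is exactly the paper's per-point Carath\'{e}odory selection made constructive. What your route buys is self-containedness: no citation of Carath\'{e}odory, and the covering claim is elementary (and in fact valid for any polytope --- simpliciality is only needed to make the pieces simplices). What it costs is the extra bookkeeping translating between normality and IDP on both ends, which the paper avoids by never leaving the cone formulation. On your closing worry: that interchange is not actually delicate, and it is resolved exactly where the paper also invokes Lemma \ref{lemma:basis} --- every $d$-simplex on $d+1$ vertices of $\Pc$ is itself an integral cyclic polytope $C_d(\tau_{i_1},\ldots,\tau_{i_{d+1}})$, so Lemma \ref{lemma:basis} yields $\ZZ\Ac = \ZZ^{d+1}$ for all of them simultaneously, giving normality $\Leftrightarrow$ IDP for each sub-simplex; and for $\Pc$ itself you only need the unconditional implication that IDP implies normality.
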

\begin{proof}
Let $v_1,\ldots,v_n$ be the vertices of $\Pc^*$. 
A proof is a direct application of Carath\'{e}odory's Theorem (see, e.g., \cite[Section 7]{Sch}). 
Let $x \in \ZZ\Ac_\Pc \cap \QQ_{\geq 0}\Ac_\Pc$. 
Now, Carath\'{e}odory's Theorem guarantees that there exist $d+1$ vertices $v_{i_1},\ldots,v_{i_{d+1}}$ of $\Pc^*$ 
such that $x \in \ZZ\Ac_\Qc \cap \QQ_{\geq 0}\Ac_\Qc$, where $\Qc=\con(\set{v_{i_1},\ldots,v_{i_{d+1}}})$. 
Here we use that $\ZZ\Ac_\Pc=\ZZ^{d+1} = \ZZ\Ac_\Qc$ by Lemma \ref{lemma:basis}.
If $\Qc$ is normal, then we have $x \in \ZZ_{\geq 0}\Ac_\Qc$, 
in particular, $x \in \ZZ_{\geq 0}\Ac_\Pc$. This implies that $\Pc$ is normal. 
\end{proof}


\section{Normal cyclic polytopes}

Our goal of this section is to prove 
\begin{Theorem}\label{main1}
Work with the same notations as in Section 1. 
If $\Delta_{i,i+1} \geq d^2-1$ for $1 \leq i  \leq n-1$, 
then $\Pc=C_d(\tau_1,\ldots,\tau_n)$ is normal. 
In particular, $\gamma_d \leq d^2-1$. 
\end{Theorem}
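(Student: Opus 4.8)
By Lemma~\ref{covering}, it suffices to prove that every $d$-dimensional simplex spanned by $d+1$ of the vertices $v_{i_0},\ldots,v_{i_d}$ of $\Pc^*$ is normal, under the hypothesis $\Delta_{i,i+1}\geq d^2-1$. Since gaps only grow when we pass to a subset of the $\tau_i$, we may as well assume $n=d+1$ and work with a single simplex $\Qc=\con(\set{v_1,\ldots,v_{d+1}})$, still satisfying $\Delta_{i,i+1}\geq d^2-1$ for all consecutive indices. By Lemma~\ref{lemma:basis} we have $\ZZ\Ac_\Qc=\ZZ^{d+1}$, so normality of $\Qc$ is equivalent to the integer decomposition property; that is, the plan is to show that any lattice point $x\in m\Qc^*\cap\ZZ^{d+1}$ (equivalently, $x\in\QQ_{\geq0}\Ac_\Qc\cap\ZZ^{d+1}$ at ``height'' $m$) decomposes as a sum of $m$ lattice points of $\Qc^*$.

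**Key steps.**
First I would use the $\bvec{S}$-machinery of Proposition~\ref{prop:bvectors} as a coordinate system: the vectors $\bvec{i_1},\bvec{i_1i_2},\ldots,\bvec{i_1\cdots i_{d+1}}$ form a $\ZZ$-basis of $\ZZ^{d+1}$, so any $x\in\ZZ^{d+1}$ has unique integer coordinates in this basis, and I would convert between these coordinates and the barycentric coordinates $\lambda_i$ in the expansion $x=\sum_i\lambda_i v_i$. The denominators of the $\lambda_i$ are controlled by the products $\prod_{j\ne i}\Delta_{ij}$, exactly as flagged in the discussion of $\sigma_S$ before Lemma~\ref{lemma:delta}. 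The heart of the argument will be to take a lattice point $x$ in the cone with $\sigma_S(x)\geq m$-type bounds and to \emph{peel off} a single vertex $v_k$: I would show that after subtracting $v_k$, the point $x-v_k$ still lies in the cone $\QQ_{\geq0}\Ac_\Qc$, i.e.\ all $d+1$ facet-functionals $\sigma_{S}$ remain nonnegative. The gap bound $\Delta_{i,i+1}\geq d^2-1$ is precisely what I expect to make this peeling possible, by forcing the relevant $\lambda_i$ to exceed $1$ whenever the total height is at least the critical value. Iterating the peeling reduces $m$ by one at each stage and terminates in the base case $m=1$, giving a decomposition into $m$ vertices plus lattice points of $\Qc^*$.

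**Main obstacle.**
The crux is the quantitative step: showing that one can always find a vertex $v_k$ whose subtraction keeps $x$ inside the cone. This amounts to a simultaneous lower bound on enough of the barycentric coordinates $\lambda_i$ in terms of $m$ and the gaps $\Delta_{ij}$, and it is here that the specific constant $d^2-1$ must be squeezed out. I would try to reduce to a ``worst case'' configuration---using Lemma~\ref{equiv} to normalize (e.g.\ translating so $\tau_1=0$, and possibly reflecting $\tau_i\mapsto-\tau_i$) and using the explicit entries of the matrix \eqref{eg:matrixdelta}---and then estimate the products $\prod_{j\ne i}\abs{\Delta_{ij}}$ from below against the partial products appearing in the $\bvec{S}$-coordinates. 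The delicate point is that the bound on a single coordinate must hold simultaneously for a vertex $v_k$ that is \emph{safe} with respect to all $d+1$ facets at once; balancing the alternating-sign contributions in the $\bvec{S}$-expansion (as in the proof of Lemma~\ref{lemma:basis}, where signs alternate and magnitudes are non-increasing) against the height $m$ is where the combinatorial estimate linking $d^2-1$ to the cyclic structure will have to be carried out carefully.
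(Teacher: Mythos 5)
Your reduction steps are right and match the paper: Lemma~\ref{covering} reduces to the simplex case $n=d+1$, Lemma~\ref{lemma:basis} gives $\ZZ\Ac_\Qc=\ZZ^{d+1}$ so normality is the integer decomposition property, and the denominators of the barycentric coordinates are controlled by $\prod_{j\neq i}\Delta_{ij}$ via the functionals $\sigma_S$. But the core of your plan --- iteratively peeling off a \emph{vertex} $v_k$ so that $x-v_k$ stays in the cone --- fails, and it fails in exactly the critical regime. Write $x=\sum_{i=1}^{d+1} r_i v_i$ with $\sum_i r_i=m$. Subtracting $v_k$ keeps $x-v_k$ in the cone only if $r_k\geq 1$; when some $r_k\geq 1$ the peeling is trivial (and the paper's first step does exactly this). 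The hard case is $2\leq m\leq d$ with all $r_i<1$ (e.g.\ all $r_i$ close to $m/(d+1)$): there no vertex can be peeled off, and no gap condition $\Delta_{i,i+1}\geq d^2-1$ can ``force the relevant $\lambda_i$ to exceed $1$'' as you assert --- the gap hypothesis constrains the possible \emph{denominators} of the $r_i$, not their size, and lattice points with all barycentric coordinates below $1$ genuinely occur since the simplex has huge normalized volume $\prod_{i<j}\Delta_{ij}$ and hence many non-vertex lattice points. So your induction has no valid step exactly where the theorem is nontrivial.

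What the paper does instead, and what is missing from your proposal, is to peel off a height-one lattice point of $\Qc^*$ that is \emph{not} a vertex: one seeks $(r_1',\ldots,r_{d+1}')$ with $\sum_i r_i'=1$, $0\leq r_i'\leq r_i$, and $\sum_i r_i' v_i\in\ZZ^{d+1}$. Two ingredients make this work. First, a purely combinatorial estimate (Lemma~\ref{key}): if $0\leq r_1\leq\cdots\leq r_{d+1}\leq 1$ and $\sum_i r_i=m\in\{2,\ldots,d\}$, one can choose indices $i_1,\ldots,i_l$ ($2\leq l\leq d$) with $\sum_{j=1}^{l}r_{i_j}\geq 1+\tfrac{1}{d+1}$ while $\sum_{j=1}^{l-1}r_{i_j}\leq 1$, giving just enough slack above height $1$ to absorb rounding. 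Second, an integrality recursion (Lemma~\ref{Z}, used with the triangular form \eqref{eg:matrixdelta}): one chooses integers $p_j$ so that $r_{i_j}'=p_j/\prod_{k\neq j}|\Delta_{i_k i_j}|$ makes each coordinate $Z_l(j)$ an integer, at the cost of perturbing $r_{i_j}$ by at most $1/\prod_{k<j}|\Delta_{i_k i_j}|\leq 1/D^{j-1}$ with $D=d^2-1$. The total rounding error is controlled by $\epsilon(l)=(l-1)/D$, and the constant $d^2-1$ is squeezed out precisely from the identity $\epsilon(d)=(d-1)/(d^2-1)=1/(d+1)$, matching the slack from Lemma~\ref{key}. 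Your proposal correctly identifies where the difficulty lies but supplies a mechanism (vertex subtraction, coordinates forced above $1$) that cannot deliver it; without the non-vertex height-one point and the two lemmata above, the argument does not close.
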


Most parts of this section are devoted to proving the simplex case. 
In fact, once we know that $\Pc$ is always normal 
when $n=d+1$ and $\Delta_{i,i+1} \geq d^2-1$ for $1 \leq i \leq d$, 
Theorem \ref{main1} follows immediately from Lemma \ref{covering}.

\bigskip 

Before giving a proof, we prepare two lemmata, Lemma \ref{key} and Lemma \ref{Z}. 
First, for Lemma \ref{key}, we start from proving 
\begin{Proposition}\label{minmax}
Let $(r_1,r_2,\ldots,r_{d+1}) \in \QQ^{d+1}$ satisfying 
$$0 \leq r_1 \leq r_2 \leq \cdots \leq r_{d+1} \leq 1 \;\; \text{ and } \;\; 
\sum_{i=1}^{d+1} r_i =m.$$ Then one has 
\begin{center}
{\em (a)} $\sum_{i=1}^jr_i \leq \frac{jm}{d+1}$ \;\; and \;\; 
{\em (b)} $\sum_{i=1}^jr_{d+2-i} \geq \frac{jm}{d+1}$ 
\end{center}
for any integer $j$ with $1 \leq j \leq d+1$. 
\end{Proposition}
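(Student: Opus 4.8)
The plan is to reduce both parts to a single monotonicity fact about partial averages, for which the bounds $r_i \in [0,1]$ turn out to be irrelevant --- only the ordering of the $r_i$ matters. For $1 \leq j \leq d+1$ write $A_j \defa \sum_{i=1}^j r_i$ and consider the partial averages $A_j/j$. First I would show that, because $r_1 \leq r_2 \leq \cdots \leq r_{d+1}$, these averages are non-decreasing in $j$. This follows from the identity
\[ \frac{A_{j+1}}{j+1} - \frac{A_j}{j} = \frac{j\, r_{j+1} - A_j}{j(j+1)}, \]
whose numerator is nonnegative, since each of the $j$ summands of $A_j$ is at most $r_j \leq r_{j+1}$, so that $A_j \leq j\, r_{j+1}$.

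Granting this, part (a) is immediate: for every $j$ one has $A_j/j \leq A_{d+1}/(d+1) = m/(d+1)$, and multiplying by $j$ gives $\sum_{i=1}^j r_i \leq jm/(d+1)$; the boundary case $j=d+1$ degenerates to the equality $A_{d+1}=m$, which is consistent. For part (b) I would run the same computation on the reversed sequence $s_i \defa r_{d+2-i}$, which satisfies $s_1 \geq s_2 \geq \cdots \geq s_{d+1}$ and $\sum_{i=1}^{d+1} s_i = m$. The analogous identity now carries a nonpositive numerator, so the partial averages $\frac{1}{j}\sum_{i=1}^j s_i$ are \emph{non-increasing}, whence $\frac{1}{j}\sum_{i=1}^j s_i \geq m/(d+1)$; rewriting $\sum_{i=1}^j s_i = \sum_{i=1}^j r_{d+2-i}$ yields exactly (b).

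Since the argument rests only on an elementary averaging inequality, I do not anticipate any real obstacle. The single point requiring care is to get the direction of the monotonicity right in the two cases --- non-decreasing partial averages for the sorted sequence in (a), non-increasing ones for the reversed sequence in (b) --- and to confirm that the degenerate endpoints $j=1$ and $j=d+1$ are handled correctly. As an alternative to the reversed-sequence computation, (b) can also be deduced formally from (a) through the substitution $r_i \mapsto 1 - r_{d+2-i}$, which preserves the hypotheses of the proposition (the new entries again lie in $[0,1]$ and are sorted) while replacing $m$ by $(d+1)-m$; in this route the bounds $r_i \in [0,1]$ are precisely what make the substitution legitimate.
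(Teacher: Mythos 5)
Your proof is correct, and it takes a genuinely different route from the paper. You use the classical fact that the partial averages $A_j/j$ of a non-decreasing sequence are themselves non-decreasing, proved in one line via the identity $\frac{A_{j+1}}{j+1}-\frac{A_j}{j}=\frac{j\,r_{j+1}-A_j}{j(j+1)}$, so that both (a) and (b) fall out by comparing each partial average with the global average $m/(d+1)$ (running the reversed sequence for (b), or equivalently your substitution $r_i\mapsto 1-r_{d+2-i}$). The paper instead argues by induction on $j$: it writes $d+1=kj+q$ by division with remainder, supposes $\sum_{i=1}^j r_i > \frac{jm}{d+1}$, bounds the $k$ consecutive blocks of length $j$ from below using the ordering, invokes the induction hypothesis for statement (b) with the remainder $q$, and sums everything to reach the contradiction $m>m$. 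Your argument is shorter, avoids induction and case splits on the remainder entirely, and makes transparent a point the paper leaves implicit: the hypothesis $0\leq r_i\leq 1$ plays no role in this proposition (it is needed only later, in Lemma \ref{key}) --- only the ordering and the value of the sum matter. The paper's quotient--remainder technique, on the other hand, is the same device reused in the more delicate steps of the proof of Lemma \ref{key}, so its appearance here serves as a warm-up for that argument; nothing is lost mathematically by replacing it with yours.
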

\begin{proof}
We prove by induction on $j$. 

First, we show $r_1 \leq \frac{m}{d+1}$. 
Suppose that $r_1 > \frac{m}{d+1}$. 
Then one has $r_i>\frac{m}{d+1}$ for all $1 \leq i \leq d+1$ by $r_1 \leq r_2 \leq \cdots \leq r_{d+1}$. 
Thus, $m=\sum_{i=1}^{d+1}r_i>(d+1) \cdot \frac{m}{d+1}=m,$ a contradiction. 
Similarly, we also have $r_{d+1} \geq \frac{m}{d+1}$. 

Now, we assume that the assertions (a) and (b) hold for any integer $j'$ with $1 \leq j' < j$, 
where $j$ is some integer with $2 \leq j \leq d+1$. 
Let $d+1=kj+q$, where $k$ is a positive integer and $0 \leq q \leq j-1$, 
i.e., $k$ (resp. $q$) is a quotient (resp. a remainder) of $d+1$ divided by $j$. 
Suppose that $\sum_{i=1}^jr_i > \frac{jm}{d+1}$. Then one has 
$$
\sum_{i=1}^jr_{(k-1)j+i} \geq \sum_{i=1}^jr_{(k-2)j+i} \geq \cdots \geq 
\sum_{i=1}^jr_i > \frac{jm}{d+1}. 
$$
Moreover, by the hypothesis of induction, one also has 
$\sum_{i=kj+1}^{d+1}r_i=\sum_{i=1}^qr_{d+2-i} \geq \frac{mq}{d+1}$ 
when $q \not=0$. Hence, we obtain 
$$m=\sum_{i=1}^{d+1}r_i > k\cdot\frac{jm}{d+1}+\frac{mq}{d+1}=m\cdot\frac{kj+q}{d+1}=m,$$ 
a contradiction. Therefore, the assertion (a) also holds for $j$. 
Similarly, we also have the assertion (b) for $j$, as required. 
\end{proof}

\begin{Lemma}\label{key}
Let $d$ be a positive integer and $(r_1,r_2,\ldots,r_{d+1}) \in \QQ^{d+1}$ satisfying that 
$0 \leq r_1 \leq r_2 \leq \cdots \leq r_{d+1} \leq 1$ and that 
$\sum_{i=1}^{d+1} r_i$ is an integer which is greater than 1. Then one has 
\begin{eqnarray}\label{eq}
\max_{\substack{1 \leq i_1 < i_2 < \cdots < i_l \leq d+1, \\ 2 \leq l \leq d}}
\left\{ \sum_{j=1}^lr_{i_j} : \sum_{j=1}^{l-1}r_{i_j} \leq 1 \right\} \geq 1+\frac{1}{d+1}. 
\end{eqnarray}
\end{Lemma}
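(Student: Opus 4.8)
The plan is to exhibit, in the essential case $\sum_i r_i=m=2$, two explicit index sets whose sizes and sums are forced to straddle the target $1+\tfrac1{d+1}$, and to conclude by averaging them; the case $m\ge 3$ I expect to follow from the same two candidates with room to spare.

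First I would record the consequences of Proposition \ref{minmax} that drive everything. Writing $S_j=\sum_{i=1}^j r_i$, part (b) gives $r_{d+1}\ge \tfrac{m}{d+1}\ge \tfrac{2}{d+1}$, and part (a) lets me locate where the partial sums cross $1$: let $p$ be the largest index with $S_p\le 1$. Since $S_1=r_1\le \tfrac{m}{d+1}\le 1$ and $S_{d+1}=m\ge 2>1$, one has $1\le p\le d$, and $S_{p+1}=S_p+r_{p+1}>1$ forces $S_p>1-r_{p+1}\ge 1-r_{d+1}$; in particular $G_1:=S_p+r_{d+1}>1$. Throughout I would treat $p\le d-1$ as the main case and dispose of $p=d$ (which, since $S_{d+1}=m\ge 2$ and $r_{d+1}\le 1$, forces $m=2$, $r_{d+1}=1$, $S_d=1$) by hand via the pair $\{d,d+1\}$: its all-but-top part is $r_d\le 1$ and its sum is $r_d+1\ge 1+\tfrac1d\ge 1+\tfrac1{d+1}$, using $r_d\ge S_d/d=1/d$.

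For the main case with $m=2$, the heart of the argument is to pair the sets
\[
A_1=\{1,\dots,p\}\cup\{d+1\}, \qquad A_2=\{p+1,\dots,d+1\}.
\]
In $A_1$ the largest coordinate is $r_{d+1}$ and the remaining coordinates sum to $S_p\le 1$, so $A_1$ is admissible and contributes $S_p+r_{d+1}=G_1$. In $A_2$ the largest coordinate is again $r_{d+1}$, and the remaining coordinates sum to $(m-S_p)-r_{d+1}=2-G_1$, which is $\le 1$ \emph{precisely because} $G_1\ge 1$; thus $A_2$ is also admissible and contributes $m-S_p=2-S_p$. Both sets have size in $[2,d]$ whenever $1\le p\le d-1$. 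Adding the two contributions, the $S_p$-terms cancel and
\[
(S_p+r_{d+1})+(2-S_p)=2+r_{d+1}\ge 2+\tfrac{2}{d+1}=2\left(1+\tfrac1{d+1}\right),
\]
so at least one of the two admissible sets reaches $1+\tfrac1{d+1}$. Note that integrality enters twice: through $m=2$ (making the complementary sum equal to $2-S_p$) and through the feasibility test $2-G_1\le 1\iff G_1\ge 1$.

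The remaining case $m\ge 3$ is where I expect the main obstacle to lie, even though there should now be slack. The all-large set $A_2$ may violate the constraint (its all-but-top part can exceed $1$), so I would replace it by the largest admissible top-segment, namely the set of the $j_0$ largest coordinates, where $j_0$ is the largest $j$ with $U_j-r_{d+1}\le 1$ and $U_j$ denotes the sum of the $j$ largest $r_i$. The idea is that whenever $A_1$ falls short, the crossing condition $U_{j_0+1}-r_{d+1}>1$ forces the top coordinates to be large, so that $U_{j_0}$ already exceeds $1+\tfrac1{d+1}$; here Proposition \ref{minmax}(b) in the form $U_{j_0}\ge \tfrac{j_0 m}{d+1}$, together with the monotonicity $r_1\le\cdots\le r_{d+1}$, should supply the quantitative estimate. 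The delicate point I anticipate having to work through is the bookkeeping of the overlap between the small prefix $\{1,\dots,p\}$ and the large top $\{d+2-j_0,\dots,d+1\}$: one must check that the two candidate sets are simultaneously admissible with sizes in $[2,d]$, and that their sums cannot both stay below $1+\tfrac1{d+1}$ without forcing $\sum_i r_i>m$, contradicting the integrality of $m$.
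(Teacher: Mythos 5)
Your case $m=2$ is complete and correct, and it is in fact a genuinely different (and slicker) route than the paper's, which runs one uniform construction for all $2 \le m \le d$: writing $d+1 = km+q$, the paper takes the spread-out index set $\{q+1,\ m+q+1,\ \ldots,\ (k-1)m+q+1\} \cup \{d+1\}$ and proves admissibility and the lower bound in its second through fourth steps. Your pairing of $A_1$ and $A_2$, with the feasibility test $2 - G_1 \le 1 \Leftrightarrow G_1 \ge 1$ and the averaging identity $(S_p + r_{d+1}) + (2 - S_p) = 2 + r_{d+1} \ge 2 + \frac{2}{d+1}$ via Proposition \ref{minmax}(b), settles $m=2$ in a few lines, and your boundary case $p=d$ checks out. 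But the substance of Lemma \ref{key} is the whole range $2 \le m \le d$, and for $m \ge 3$ you offer only a plan that you yourself flag as unfinished; as it stands the proposal does not prove the lemma.

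Moreover the plan has a concrete hole, not just missing bookkeeping. Write $\beta = \frac{1}{d+1}$, let $u_1 \ge u_2 \ge \cdots$ be the coordinates in decreasing order and $U_j = u_1 + \cdots + u_j$. If $A_1$ fails, i.e.\ $S_p + r_{d+1} < 1+\beta$, then $S_p > 1 - r_{p+1}$ forces $r_{d+1} - r_{p+1} < \beta$, so \emph{all} of $r_{p+1},\ldots,r_{d+1}$ cluster within $\beta$ of the top. If the full tail $\{p+1,\ldots,d+1\}$ were admissible you would be done (its sum is $m - S_p \ge m-1 \ge 2$), so in the remaining case $j_0 + 1 \le d+1-p$ and hence $u_{j_0+1} \ge r_{p+1} > r_{d+1} - \beta$. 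The crossing condition $U_{j_0+1} - r_{d+1} > 1$ then yields only $U_{j_0} > 1 + (r_{d+1} - u_{j_0+1})$, and the clustering makes this strictly weaker than $1+\beta$: precisely when $A_1$ fails, the mechanism you invoke for the top segment gives nothing beyond $U_{j_0} > 1$. Nor does Proposition \ref{minmax}(b) close the gap: $U_{j_0} \ge \frac{j_0 m}{d+1}$ suffices only when $j_0 \ge \frac{d+2}{m}$, whereas the crossing gives only $j_0 > \frac{1}{r_{d+1}}$, which for large $r_{d+1}$ is far smaller. So the dichotomy ``either $A_1$ or the largest admissible top segment reaches $1+\beta$'' would have to be wrung out of the global constraints (total sum $m$, total count $d+1$), and it is not evident that it is even true; in near-extremal configurations both candidates hover just above $1$. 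The paper sidesteps this entirely: its arithmetic-progression set with spacing $m$ is simultaneously admissible (a shift-and-sum pigeonhole, second step) and provably $\ge 1 + \frac{1}{d+1}$ (third and fourth steps). That choice of a \emph{spread-out} index set, rather than a prefix or a top segment, is the missing idea in your proposal.
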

\begin{proof}
Let $m=\sum_{i=1}^{d+1}r_i$. When $m > d$, it must be satisfied that $r_i = 1$ 
for $1 \leq i \leq d+1$ and $m=d+1$ by our assumption. 
Thus, we may assume that $2 \leq m \leq d$. 
Let $M$ denote the value of the left-hand side of \eqref{eq}. 

{\bf The first step.} 
Assume that $m-1 > \lfloor \frac{d+1}{2} \rfloor$. 
Then, by Proposition \ref{minmax}, one has $r_d+r_{d+1} \geq \frac{2m}{d+1}$, 
while $r_d \leq 1$. Hence, $$M \geq r_d+r_{d+1} \geq \frac{2m}{d+1} 
> \frac{2}{d+1}\left( \left\lfloor \frac{d+1}{2} \right\rfloor +1 \right) 
\geq \frac{2}{d+1}\left( \frac{d}{2} +1 \right)=1+\frac{1}{d+1}.$$ 

{\bf The second step.} 
Assume that $m-1 \leq \lfloor \frac{d+1}{2} \rfloor$ and 
let $d+1=km+q$, where $k$ is a positive integer and $0 \leq q \leq m-1$, 
i.e., $k$ (resp. $q$) is a quotient (resp. a remainder) of $d+1$ divided by $m$. 

If we suppose that $\sum_{j=0}^{k-1}r_{jm+q+1} > 1$, then one has 
\begin{eqnarray*}
1<\sum_{j=0}^{k-1}r_{jm+q+1} \leq \sum_{j=0}^{k-1}r_{jm+q+2} \leq 
\cdots \leq \sum_{j=0}^{k-1}r_{jm+q+m}. 
\end{eqnarray*}
Thus, $m=\sum_{i=1}^{d+1}r_i \geq \sum_{i=q+1}^{d+1}r_i > m$, a contradiction. 
Hence, we have $$\sum_{j=0}^{k-1}r_{jm+q+1} \leq 1.$$ 

{\bf The third step.} 
If we assume that $q \not= m-1$, that is, $0 \leq q \leq m-2$, 
then one has $\sum_{j=0}^{k-2}r_{jm+q+2} \leq \frac{d-q-m+1}{d-q}.$ 
In fact, on the contrary, suppose that 
$\sum_{j=0}^{k-2}r_{jm+q+2} > \frac{d-q-m+1}{d-q}.$ Then, 
\begin{eqnarray*}
\frac{d-q-m+1}{d-q} < \sum_{j=0}^{k-2}r_{jm+q+2} \leq \sum_{j=0}^{k-2}r_{jm+q+3} \leq 
\cdots \leq \sum_{j=0}^{k-2}r_{jm+q+m+1}. 
\end{eqnarray*}
Thus, $\sum_{i=q+2}^{(k-1)m+q+1}r_i>\frac{m(d-q-m+1)}{d-q}$. 
Moreover, since $\sum_{i=q+2}^{d+1}r_i=m-\sum_{i=1}^{q+1}r_i$, 
we also have $\sum_{i=(k-1)m+q+2}^{d+1}r_i \geq \frac{(m-1)(m-\sum_{i=1}^{q+1}r_i)}{d-q}$ 
by Proposition \ref{minmax}. Hence, 
\begin{align*}
m-\sum_{i=1}^{q+1}r_i=\sum_{i=q+2}^{d+1}r_i &> \frac{m(d-q-m+1)}{d-q}+\frac{(m-1)(m-\sum_{i=1}^{q+1}r_i)}{d-q} \\
&= \frac{m(d-q)}{d-q}-\frac{(m-1)\sum_{i=1}^{q+1}r_i}{d-q} \geq m - \sum_{i=1}^{q+1}r_i, 
\end{align*}
a contradiction. Here, since $m-1 \leq \lfloor \frac{d+1}{2} \rfloor \leq \frac{d+1}{2}$ 
and $0 \leq q \leq m-2<d$, we have $m+q \leq 2m-2 \leq d+1$, 
which means that $\frac{m-1}{d-q} \leq 1$. Thus, one has 
$$\sum_{j=0}^{k-2}r_{jm+q+2} \leq \frac{d-q-m+1}{d-q}.$$ 

Similarly, if we assume that $q=m-1$, then one has 
$$\sum_{j=0}^{k-1}r_{jm+1} \leq \frac{d-m+2}{d+1}.$$ 

{\bf The fourth step.} 
In this step, we prove that $$\sum_{j=0}^{k-1}r_{jm+q+1}+r_{d+1} \geq 1+\frac{1}{d+1}.$$ 

We assume that $0 \leq q \leq m-2$. Suppose, on the contrary, 
$\sum_{j=0}^{k-1}r_{jm+q+1}+r_{d+1} < 1+\frac{1}{d+1}$. Then 
$\sum_{j=1}^{k-1}r_{jm+q+1}+r_{d+1} < 1+\frac{1}{d+1}-r_{q+1} < 1+\frac{1}{d-q}-r_{q+1}$. Thus, 
\begin{align*}
1+\frac{1}{d-q}-r_{q+1} &> \sum_{j=1}^{k-1}r_{jm+q+1}+r_{km+q} \geq \sum_{j=1}^{k-1}r_{jm+q}+r_{km+q-1} \geq \cdots \\
&\geq \sum_{j=1}^{k-1}r_{jm+q+1-(m-2)}+r_{km+q-(m-2)}=\sum_{j=0}^{k-2}r_{jm+q+3}+r_{(k-1)m+q+2}. 
\end{align*}
Moreover, by the third step, we also have $\sum_{j=0}^{k-2}r_{jm+q+2} \leq \frac{d-q-m+1}{d-q}$. Hence, 
\begin{align*}
m-\sum_{i=1}^{q+1}r_i=\sum_{i=q+2}^{d+1}r_i &< m-1+\frac{m-1}{d-q}-(m-1)r_{q+1}+\frac{d-q-m+1}{d-q} \\
&= m-(m-1)r_{q+1} \leq m-(q+1)r_{q+1} \leq m-\sum_{i=1}^{q+1}r_i, 
\end{align*}
a contradiction. Similarly, when $q=m-1$, 
if we suppose that $\sum_{j=1}^kr_{jm}+r_{km+m-1} < 1+\frac{1}{d+1}$, then 
\begin{eqnarray*}
1+\frac{1}{d+1}>\sum_{j=1}^kr_{jm}+r_{km+m-1} \geq \sum_{j=1}^kr_{jm-1}+r_{km+m-2} 
\geq \cdots \geq \sum_{j=0}^{k-1}r_{jm+2}+r_{km+1} 
\end{eqnarray*}
and $\sum_{j=0}^{k-1}r_{jm+1} \leq \frac{d-m+2}{d+1}$ by the third step, 
so we obtain $m=\sum_{i=1}^{d+1}r_i<m-1+\frac{m-1}{d+1}+\frac{d-m+2}{d+1}=m$, a contradiction. 

{\bf The fifth step.} 
Thanks to the second and fourth steps, we have 
$$M \geq \sum_{j=0}^{k-1}r_{jm+q+1}+r_{d+1} \geq 1+\frac{1}{d+1},$$ 
as desired. 
\end{proof}

\bigskip

We also prepare another 
\begin{Lemma}\label{Z}
Let $l$ be an integer with $l \geq 2$ and $i_1,\ldots,i_l$ distinct integers. We set 
\begin{eqnarray*}
Z_l(j)=
\frac{\prod_{k=1}^{j-1}\Delta_{i_ki_j}}{\prod_{1 \leq k \leq l, k\not=j}|\Delta_{i_ki_j}|}p_j+ 
\frac{\prod_{k=1}^{j-1}\Delta_{i_ki_{j+1}}}{\prod_{1 \leq k \leq l, k\not=j+1}|\Delta_{i_ki_{j+1}}|}p_{j+1}+
\cdots+\frac{\prod_{k=1}^{j-1}\Delta_{i_ki_l}}{\prod_{1 \leq k \leq l, k\not=l}|\Delta_{i_ki_l}|}p_l 
\end{eqnarray*}
for $2 \leq j \leq l$. Then, for any $2 \leq j \leq l-1$, we have 
\begin{eqnarray*}
&&Z_l(j)=\frac{\prod_{k=1}^{j-1}\Delta_{i_ki_j}}{\prod_{1 \leq k \leq l, k\not=j}|\Delta_{i_ki_j}|}p_j+ 
\frac{1}{\Delta_{i_ji_{j+1}}}Z_l(j+1)-\frac{1}{\Delta_{i_ji_{j+1}}\Delta_{i_ji_{j+2}}}Z_l(j+2)+ \\
&&\quad\quad\quad\quad\quad\quad\quad\quad\quad\quad\quad\quad\quad\quad\quad\quad\quad\quad\quad
\cdots+(-1)^{l-j+1}\frac{1}{\prod_{k=j+1}^l\Delta_{i_ji_k}}Z_l(l). 
\end{eqnarray*}
\end{Lemma}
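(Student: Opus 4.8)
The plan is to prove the asserted formula by comparing, on both sides, the coefficient of each $p_s$. Both $Z_l(j)$ and the proposed right-hand side are explicit $\QQ$-linear combinations of $p_j, p_{j+1}, \ldots, p_l$, so the identity is equivalent to a family of scalar identities among the $\Delta$'s, one for each $s$ with $j \leq s \leq l$. Write
\[
a_{t,s} \defa \frac{\prod_{k=1}^{t-1}\Delta_{i_k i_s}}{\prod_{1 \leq k \leq l,\, k \neq s}|\Delta_{i_k i_s}|}
\]
for the coefficient of $p_s$ in $Z_l(t)$, and note that the denominator is independent of $t$. Since $Z_l(t)$ involves only $p_t, \ldots, p_l$, the vector $p_s$ occurs on the right-hand side exactly in the terms with $j+1 \leq t \leq s$ (plus the leading term when $s=j$). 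The case $s=j$ is immediate: only the leading term contributes $p_j$, with coefficient $a_{j,j}$, matching $Z_l(j)$.

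For $s \geq j+1$ the claim amounts to
\[
a_{j,s} = \sum_{t=j+1}^{s}(-1)^{t-j+1}\frac{a_{t,s}}{\prod_{k=j+1}^{t}\Delta_{i_j i_k}}.
\]
Substituting the formula for $a_{t,s}$, the common denominator cancels from every term, as does the factor $\prod_{k=1}^{j-1}\Delta_{i_k i_s}$ (which is nonzero since the $i_k$ are distinct), so I am left with the purely scalar identity
\[
\sum_{t=j+1}^{s}(-1)^{t-j+1}\frac{\prod_{k=j}^{t-1}\Delta_{i_k i_s}}{\prod_{k=j+1}^{t}\Delta_{i_j i_k}} = 1 .
\]

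Establishing this last identity is the heart of the argument, and the step I expect to require the most care, chiefly in sign and index bookkeeping; I would carry it out by telescoping. Set $Q_t \defa \prod_{k=j}^{t-1}\Delta_{i_k i_s}$ and $P_t \defa \prod_{k=j+1}^{t}\Delta_{i_j i_k}$, so that the summand is $u_t = (-1)^{t-j+1}Q_t/P_t$, and introduce the auxiliary quantity $w_t \defa (-1)^{t-j}Q_t/P_{t-1}$. Using the factor recursions $Q_{t+1} = Q_t\,\Delta_{i_t i_s}$ and $P_t = P_{t-1}\,\Delta_{i_j i_t}$, together with the cocycle relation $\Delta_{i_j i_t} + \Delta_{i_t i_s} = \Delta_{i_j i_s}$ already recorded in the proof of Proposition \ref{prop:bvectors}, a short computation gives $w_{t+1} - w_t = \Delta_{i_j i_s}\, u_t$. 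Hence the sum telescopes to $(w_{s+1} - w_{j+1})/\Delta_{i_j i_s}$. I then observe that $w_{j+1} = -\Delta_{i_j i_s}$ and that $w_{s+1} = 0$, since $Q_{s+1}$ contains the vanishing factor $\Delta_{i_s i_s} = 0$; this yields the value $1$ and completes the proof.
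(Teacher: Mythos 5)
Your proof is correct: the reduction to the scalar identity $\sum_{t=j+1}^{s}(-1)^{t-j+1}\prod_{k=j}^{t-1}\Delta_{i_k i_s}\big/\prod_{k=j+1}^{t}\Delta_{i_j i_k}=1$ is right, and the telescoping via $w_{t+1}-w_t=\Delta_{i_j i_s}\,u_t$ (using $\Delta_{i_j i_t}+\Delta_{i_t i_s}=\Delta_{i_j i_s}$), with $w_{j+1}=-\Delta_{i_j i_s}$ and $w_{s+1}=0$, checks out. The paper gives no details for this lemma, stating only that ``a proof is given by elementary computations,'' and your coefficient-by-coefficient verification is exactly such a computation, so your argument matches the intended approach.
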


A proof is given by elementary computations. 

\smallskip

Now, Lemma \ref{Z} says that if $Z_l(j+1),\ldots,Z_l(l)$ are integers, 
then there exists an integer $p_j$ such that $Z_l(j)$ becomes an integer. 
In fact, since 
$$ \frac{1}{\Delta_{i_ji_{j+1}}}Z_l(j+1)-
\cdots+(-1)^{l-j+1}\frac{1}{\prod_{k=j+1}^l\Delta_{i_ji_k}}Z_l(l) 
=\frac{P}{C},$$ 
where $P$ is some integer and $C=\prod_{k=j+1}^l|\Delta_{i_ji_k}|$, and the numerator (resp. the denominator) of 
$\frac{\prod_{k=1}^{j-1}\Delta_{i_ki_j}}{\prod_{1 \leq k \leq l, k\not=j}|\Delta_{i_ki_j}|}$ 
is either 1 or $-1$ (resp. $C$), it is obvious that 
there exists an integer $p_j$ such that $Z_l(j)$ becomes an integer.

\bigskip

Let $\Qc \subset \RR^N$ be an integral convex polytope of dimension $d$. 
In general, when $\ZZ \Ac_{\Qc}=\ZZ^{N+1}$, 
in order to prove that $\Qc$ is normal, it suffices to show that 
for any $\alpha=(m,\alpha_1,\ldots,\alpha_N) \in \ZZ \Ac_{\Qc} \cap \QQ_{\geq 0} \Ac_{\Qc}
=\QQ_{\geq 0} \Ac_{\Qc} \cap \ZZ^{N+1}$ with $m \geq 2$, 
we find $\alpha' \in \Qc^* \cap \ZZ^{N+1}$ and $\alpha'' \in \QQ_{\geq 0} \Ac_{\Qc} \cap \ZZ^{N+1}$ 
with $\alpha=\alpha'+\alpha''$. (This is equivalent to prove that $\Qc$ satisfies the integer decomposition property.) 
In particular, when $\Qc$ is a simplex, since there exists a unique $(r_1,\ldots,r_{d+1}) \in \QQ^{d+1}$ 
such that $\alpha=\sum_{i=1}^{d+1}r_iu_i$ and $\sum_{i=1}^{d+1}r_i=m$, 
where $u_1,\ldots,u_{d+1}$ are the vertices of $\Qc^*$, 
we may find $(r_1',\ldots,r_{d+1}') \in \QQ^{d+1}$ with $\sum_{i=1}^{d+1}r_i'u_i \in \Qc^* \cap \ZZ^{N+1}$ 
and $(r_1'',\ldots,r_{d+1}'') \in \QQ^{d+1}$ with 
$\sum_{i=1}^{d+1}r_i''u_i \in \QQ_{\geq 0} \Ac_{\Qc} \cap \ZZ^{N+1}$ 
satisying $r_i'+r_i''=r_i$ for $1 \leq i \leq d+1$. 

Hence, it is enough to show that for any $\alpha=\sum_{i=1}^{d+1}r_iu_i \in \QQ_{\geq 0} \Ac_{\Qc} \cap \ZZ^{N+1}$ 
with $\sum_{i=1}^{d+1}r_i \geq 2$, 
there exists $(r_1',\ldots,r_{d+1}') \in \QQ^{d+1}$ such that 
$$\sum_{i=1}^{d+1}r_i'=1, \;\; 0 \leq r_i' \leq r_i \; \text{ for } 1 \leq i \leq d+1 \;\; 
\text{ and } \;\; \sum_{i=1}^{d+1}r_i'u_i \in \ZZ^{N+1}.$$

\bigskip

Now, we come to the position to verify the normality of integral cyclic polytopes 
in the case where $n=d+1$ and $\Delta_{i,i+1} \geq d^2-1$ for $1 \leq i \leq d$. 
Let $\Pc$ be such cyclic polytope. 
Let $m$ be an integer with $m \geq 2$ and $\alpha$ an element in 
$\ZZ \Ac_{\Pc} \cap \QQ_{\geq 0} \Ac_{\Pc}=\QQ_{\geq 0} \Ac_{\Pc} \cap \ZZ^{d+1}$ 
with the first coordinate $m$. 
Since $\Pc^*$ is a simplex of dimension $d$, there exists a unique $(r_1,\ldots,r_{d+1}) \in \QQ^{d+1}$, 
where $\sum_{i=1}^{d+1}r_i=m$, such that $\alpha = \sum_{i=1}^{d+1}r_iv_i$. 
Then what we must do is to show that there exists $(r_1',\ldots,r_{d+1}') \in \QQ^{d+1}$ such that 
\begin{eqnarray}\label{assert}
\sum_{i=1}^{d+1}r_i'=1, \;\; 0 \leq r_i' \leq r_i \; \text{ for } 1 \leq i \leq d+1 \;\; 
\text{ and } \;\; \sum_{i=1}^{d+1}r_i'v_i \in \ZZ^{d+1}. 
\end{eqnarray}

{\bf The first step.} 
If there exists $r_i$ with $r_i \geq 1$, say, $r_1$, 
then we may set $r_1'=1$ and $r_2'=\cdots=r_{d+1}'=0$. 
Moreover, when $m \geq d+1$, since $\sum_{i=1}^{d+1}r_i=m$ and $r_i \geq 0$, 
there is at least one $r_i$ with $r_i \geq 1$. 
Thus, we may assume that 
$$2 \leq m \leq d \;\; \text{ and } \;\; 0 \leq r_i \leq 1 
\;\; \text{ for } \;\; 1 \leq i \leq d+1.$$

{\bf The second step.} 
By Lemma \ref{key}, there exist $r_{i_1},\ldots,r_{i_l}$ among $(r_1,\ldots,r_{d+1})$ 
such that $\sum_{j=1}^lr_{i_j} \geq 1 + \frac{1}{d+1}$ and $\sum_{j=1}^{l-1}r_{i_j} \leq 1$, 
where $0 \leq r_{i_1} \leq \cdots \leq r_{i_l} \leq 1$ and $2 \leq l \leq d$, 
although we do not know whether $1 \leq i_1 < \cdots < i_l \leq d+1$. 
Let $r_{i_1},\ldots,r_{i_l}$ be such ones. However, 
we assume that $0 \leq r_{i_l} \leq r_{i_{l-1}} \leq \cdots \leq r_{i_1} \leq 1$, i.e., we have 
$$\sum_{j=2}^l r_{i_j} \leq 1 \;\; \text{ and } \;\; \sum_{j=1}^lr_{i_j} \geq 1+\frac{1}{d+1}.$$ 

Let $D=d^2-1$. Thus, $|\Delta_{ij}| \geq D$ for any $1 \leq i \not= j \leq d+1$. 
Now, we set $\epsilon(l)=\frac{l-1}{D}$ for $2 \leq l \leq d$. 
Then it is easy to see that $\epsilon(l)$ enjoys the following properties: 
\begin{eqnarray}\label{property}
&&\epsilon(l) \geq \sum_{a=2}^l\frac{1}{D^{a-1}}, \;\;\;\; 
\frac{1}{d+1} = \epsilon(d) > \epsilon(d-1) > \cdots > \epsilon(2), \\
\nonumber &&\epsilon(l)-\frac{l-j+1}{D^{j-1}} > \epsilon(j-1) \; \text{ for } \; 3 \leq j \leq l. 
\end{eqnarray}

In the following two steps, by induction on $l$, we prove that 
if $\sum_{j=1}^lr_{i_j} \geq 1+\epsilon(l)$ and $\sum_{j=2}^lr_{i_j} \leq 1$, 
then there is $(r_1',\ldots,r_{d+1}') \in \QQ^{d+1}$ which satisfies \eqref{assert}. 
Once we know this, we obtain the required assertion 
from $2 \leq l \leq d$ and $\frac{1}{d+1} = \epsilon(d) \geq \epsilon(l)$. 

{\bf The third step.} 
Assume that $l=2$, i.e., we have $r_{i_1}+r_{i_2} \geq 1+\frac{1}{D}$, 
where $0 \leq r_{i_2} \leq r_{i_1} \leq 1$. 

Let $p$ be a nonnegative integer satisfying 
$$\frac{p}{|\Delta_{i_1i_2}|} \leq r_{i_2} < \frac{p+1}{|\Delta_{i_1i_2}|}.$$ 
Then it is clear that there exists such a unique nonnegative integer $p$. 
Let $r_{i_2}'=\frac{p}{|\Delta_{i_1i_2}|}, r_{i_1}'=1-r_{i_2}'$ and 
$r_j'=0$ for any $j$ with $j \in [d+1] \setminus \{i_1,i_2\}$. 
Thus, $\sum_{i=1}^{d+1}r_i'=1$ and $0 \leq r_{i_2}' \leq r_{i_2}$. 
Moreover, since $r_{i_2} \leq 1$, we have $r_{i_1}' = 1 - r_{i_2}' \geq 1 - r_{i_2} \geq 0$. 
In addition, by $r_{i_1}+r_{i_2} \geq 1+\frac{1}{D}$ and $|\Delta_{i_1i_2}| \geq D$, we also have 
\begin{align*}
r_{i_1}-r_{i_1}'=r_{i_1}-1+\frac{p}{|\Delta_{i_1i_2}|} 
\geq \frac{1}{D}-r_{i_2}+\frac{p}{|\Delta_{i_1i_2}|} \geq \frac{p+1}{|\Delta_{i_1i_2}|}-r_{i_2} > 0. 
\end{align*}
On the other hand, by Proposition \ref{lemma:delta}, we may consider $v_{i_1}$ and $v_{i_2}$ as 
$v_{i_1}=(1,0,\ldots,0) \;\; \text{ and } \;\; v_{i_2}=(1,\Delta_{i_1i_2},0,\ldots,0)$. 
Obviously, $\sum_{i=1}^{d+1}r_i'v_i \in \ZZ^{d+1}$. 

{\bf The fourth step.} 
Assume that $l \geq 3$. 
For each $j$ with $2 \leq j \leq l$, we define each nonnegative integer $p_j$ as follows. 
Let $p_l$ be a nonnegative integer which satisfies 
$$\frac{p_l}{\prod_{k=1}^{l-1}|\Delta_{i_ki_l}|} 
\leq r_{i_l} < \frac{p_l+1}{\prod_{k=1}^{l-1}|\Delta_{i_ki_l}|},$$ 
and for $2 \leq j \leq l-1$, let $p_j$ be an integer which satisfies 
$Z_l(j) \in \ZZ$ and 
$$\frac{p_j}{\prod_{1 \leq k \leq l, k \not= j}|\Delta_{i_ki_j}|} 
\leq r_{i_j} < \frac{p_j+\prod_{k=j+1}^l|\Delta_{i_ji_k}|}{\prod_{1 \leq k \leq l, k \not= j}|\Delta_{i_ki_j}|},$$ 
where $Z_l(j)$ is as in Lemma \ref{Z}. 
Thanks to Lemma \ref{Z}, if $Z_l(j+1),\ldots,Z_l(l) \in \ZZ$, 
then there exists an integer $p_j$ with $Z_l(j) \in \ZZ$ and each $p_j$ is uniquely determined 
by the above inequalities. 
Remark that we do not know whether $p_j$ is nonnegative except for $p_l$. 
However, in our case, we may assume that $p_2,\ldots,p_{l-1}$ are all nonnegative because of the following discussions. 
In fact, on the contrary, suppose that there is $j'$ with $p_{j'} < 0$. 
Let $q_{j'} \in \ZZ_{\geq 0}$ be a minimal nonnegative integer satisfying 
\begin{eqnarray*}
&&\frac{\prod_{k=1}^{j'-1}\Delta_{i_ki_{j'}}}{\prod_{1 \leq k \leq l, k\not=j'}|\Delta_{i_ki_{j'}}|}q_{j'}+ 
\frac{1}{\Delta_{i_{j'}i_{j'+1}}}Z_l(j'+1)-\frac{1}{\Delta_{i_{j'}i_{j'+1}}\Delta_{i_{j'}i_{j'+2}}}Z_l(j'+2)+ \\
&&\quad\quad\quad\quad\quad\quad\quad\quad\quad\quad\quad\quad\quad\quad\quad\quad\quad
\cdots+(-1)^{l-j'+1}\frac{1}{\prod_{k=j'+1}^l\Delta_{i_{j'}i_k}}Z_l(l) \in \ZZ. 
\end{eqnarray*}
In particular, it follows from the minimality of $q_{j'}$ that $0 \leq q_{j'} < \prod_{k=j'+1}^l|\Delta_{i_{j'}i_k}|$. 
By our assumption, one has 
$\frac{q_{j'}}{\prod_{1 \leq k \leq l, k \not= j'}|\Delta_{i_ki_{j'}}|} > r_{i_{j'}}.$ Thus, 
\begin{eqnarray*}
r_{i_l} \leq \cdots \leq r_{i_{j'}} < 
\frac{q_{j'}}{\prod_{1 \leq k \leq l, k \not= j'}|\Delta_{i_{j'}i_k}|} 
< \frac{\prod_{k=j'+1}^l|\Delta_{i_{j'}i_k}|}{\prod_{1 \leq k \leq l, k \not= j'}|\Delta_{i_ki_{j'}}|} 
= \frac{1}{\prod_{k=1}^{j'-1}|\Delta_{i_ki_{j'}}|} \leq \frac{1}{D^{j'-1}}, 
\end{eqnarray*}
so one has $\sum_{j=j'}^l r_{i_j} < \frac{l-j'+1}{D^{j'-1}}$. 
From $\sum_{j=1}^l r_{i_j} \geq 1 + \epsilon(l)$ and \eqref{property}, we have 
$$\sum_{j=1}^{j'-1} r_{i_j} > 1+\epsilon(l)-\frac{l-j'+1}{D^{j'-1}} > 1+\epsilon(j'-1)$$ 
when $j' \geq 3$. Hence, we may skip such case by the hypothesis of induction. 
When $j'=2$, one has $r_{i_1}>1+\epsilon(l)-\frac{l-1}{D}=1$, a contradiction. 

By using the above $p_j$'s, we define $r_1',\ldots,r_{d+1}'$ by setting 
\begin{eqnarray*}
r_a'=
\begin{cases}
\displaystyle \frac{p_j}{\prod_{1 \leq k \leq l, k \not= j}|\Delta_{i_ki_j}|}, 
\;\;\; &\text{ if } a=i_j \in \{i_2,\ldots,i_l\}, \\
\displaystyle 1 - \sum_{j=2}^lr_{i_j}', &\text{ if } a=i_1, \\
0, &\text{ otherwise}. 
\end{cases}
\end{eqnarray*}
In particular, $\sum_{a=1}^{d+1}r_a'=1$. By definition of $r_{i_2}',\ldots,r_{i_l}'$, 
we have $0 \leq r_{i_j}' \leq r_{i_j}$ for $2 \leq j \leq l$. 
Moreover, from $\sum_{j=2}^lr_{i_j} \leq 1$, we also have 
$r_{i_1}'=1-\sum_{j=2}^lr_{i_j}' \geq 1-\sum_{j=2}^lr_{i_j} \geq 0$. 
In addition, from $\sum_{j=1}^lr_{i_j} \geq 1+\epsilon(l)$ and \eqref{property}, we also have 
\begin{align*}
r_{i_1}-r_{i_1}'&=r_{i_1}-1+\sum_{j=2}^l\frac{p_j}{\prod_{1 \leq k \leq l, k \not=j}|\Delta_{i_ki_j}|} 
\geq \epsilon(l)-\sum_{j=2}^lr_{i_j}+\sum_{j=2}^l\frac{p_j}{\prod_{1 \leq k \leq l, k \not=j}|\Delta_{i_ki_j}|} \\
&\geq \sum_{j=2}^l\left( \frac{p_j}{\prod_{1 \leq k \leq l, k \not=j}|\Delta_{i_ki_j}|}+\frac{1}{D^{j-1}}-r_{i_j} \right) 
\geq \sum_{j=2}^l
\left( \frac{p_j+\prod_{k=j+1}^l|\Delta_{i_ji_k}|}{\prod_{1 \leq k \leq l, k \not= j}|\Delta_{i_ki_j}|} -r_{i_j} \right) \\
&> 0. 
\end{align*}
Finally, we verify that $\sum_{i=1}^{d+1}r_i'v_i \in \ZZ^{d+1}$. 
Again, by Proposition \ref{lemma:delta}, we may consider $v_{i_1},\ldots,v_{i_l}$ as follows: 
\begin{eqnarray*}
\begin{pmatrix}
v_{i_1} \\
v_{i_2} \\
\vdots \\
v_{i_l} 
\end{pmatrix}=
\begin{pmatrix}
1      &0               &\cdots                         &\cdots &\cdots                           &0      &\cdots &0 \\
1      &\Delta_{i_1i_2} &0                              &\ddots &\ddots                           &\vdots &       &\vdots \\
1      &\Delta_{i_1i_3} &\Delta_{i_1i_3}\Delta_{i_2i_3} &\ddots &\ddots                           &\vdots &       &\vdots \\
\vdots &\vdots          &\vdots                         &\ddots &\ddots                           &\vdots &       &\vdots \\
1      &\Delta_{i_1i_l} &\Delta_{i_1i_l}\Delta_{i_2i_l} &\cdots &\prod_{k=1}^{l-1}\Delta_{i_ki_l} &0      &\cdots &0 
\end{pmatrix}.
\end{eqnarray*}
Hence, it is easy to check that 
$$\sum_{i=1}^{d+1}r_i'v_i=\sum_{j=1}^lr_{i_j}'v_{i_j}=(1,Z_l(2),Z_l(3),\ldots,Z_l(l),0,\ldots,0) \in \ZZ^{d+1},$$ 
proving the assertion.

\begin{Remark}{\em 
Since each lattice length of an edge $\con(\set{v_i,v_j})$ of $\Pc^*$ coincides with 
$\Delta_{ij}$, where $i<j$, it follows immediately from \cite[Theorem 1.3 (b)]{Gubel}
that $\Pc$ is normal if $\Delta_{i,i+1} \geq d(d+1)$ for $1 \leq i \leq n-1$. 
(We are grateful to G\'abor Heged\"us for informing us the result \cite[Theorem 1.3 (b)]{Gubel}.) 
Thus, our constraint $\Delta_{i,i+1} \geq d^2-1$ on integral cyclic polytopes 
is better than a general case, but this bound is still very rough. 
For example, $C_3(0,1,2,3)$ is normal, 
while we have $\Delta_{12}=\Delta_{23}=\Delta_{34}=1 < 8$. 
Similarly, $C_4(0,1,3,5,6)$ is also normal, 
although one has $\Delta_{12}=\Delta_{45}=1$ and $\Delta_{23}=\Delta_{34}=2$. 
}\end{Remark}


\section{Cyclic polytopes that are not very ample}

Our goal of this section is to prove 
\begin{Theorem}\label{main2}
Let $d$ and $n$ be positive integers satisfying $n \geq d+1$ and $d \geq 4$. 
If $\Delta_{12}=1$ or $\Delta_{n-2,n-1}=1$, then $C_d(\tau_1,\ldots,\tau_n)$ is not very ample. 
\end{Theorem}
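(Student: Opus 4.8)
The plan is to show that $\bar M\setminus M$ is infinite, where $M=\ZZ_{\ge0}\Ac_\Pc$ and, by Lemma \ref{lemma:basis}, $\bar M=\ZZ\Ac_\Pc\cap\QQ_{\ge0}\Ac_\Pc=\ZZ^{d+1}\cap\cone(\Pc^*)$. The engine is the following elementary observation, which converts a single local defect into infinitely many holes. Fix a vertex $v=v_j$ of $\Pc^*$ and let $S_v=\ZZ_{\ge0}\{u-v : u\in\Pc^*\cap\ZZ^{d+1}\}$ denote its tangent monoid (a monoid of height-$0$ vectors). Suppose there is a lattice vector $z\in\ZZ^{d+1}$ lying in the tangent cone $\cone\{u-v : u\in\Pc^*\cap\ZZ^{d+1}\}$ but with $z\notin S_v$. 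Writing $z=\sum_u s_u(u-v)$ with $s_u\ge0$, for every integer $\mu\ge\sum_u s_u$ the point $z+\mu v=(\mu-\sum_u s_u)v+\sum_u s_u u$ lies in $\cone(\Pc^*)$, hence in $\bar M$. On the other hand, if $z+\mu v\in M$ then it is a sum of exactly $\mu$ height-$1$ lattice points $u_t$ (comparing first coordinates), and subtracting $\mu v$ gives $z=\sum_t(u_t-v)\in S_v$, a contradiction. Thus $\{z+\mu v : \mu\gg0\}$ is an infinite family inside $\bar M\setminus M$, and it suffices to produce a single vertex $v$ and a single such local hole $z$.

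First I would normalize. By Lemma \ref{equiv} the reflection $\tau_i\mapsto-\tau_{n+1-i}$ turns the hypothesis $\Delta_{n-2,n-1}=1$ into $\Delta_{23}=1$, so it is enough to treat the two cases $\Delta_{12}=1$ and $\Delta_{23}=1$; again by Lemma \ref{equiv} I may translate so that the unit gap is $\tau_1=0,\tau_2=1$ (resp.\ $\tau_2=0,\tau_3=1$). Next I would pass to the triangular coordinates of Lemma \ref{lemma:delta}. There the facet $\{1,\dots,d\}$ is supported by $x_d=0$, so every lattice point $u\in\Pc^*$ has $(u)_d\ge0$; taking $v=v_2$ (whose last coordinate is $0$), every generator $u-v_2$ also has nonnegative last coordinate. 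Consequently, in any representation $z=\sum c_g g$ with $c_g\ge0$ the last coordinate is additive and nonnegative, so only generators $u-v_2$ with $(u)_d\le(z)_d$ can occur. This is the crucial localization: if $z$ is chosen with bounded last coordinate, the far vertices $v_{d+2},\dots,v_n$ (whose last coordinates grow with $\tau_i$) cannot participate, so the membership question depends only on the first $d+1$ vertices and is uniform in $n$.

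With $n$ thus eliminated, the plan is to write the candidate $z$ as an explicit integer combination of the basis vectors $\bvec{S}$ of Proposition \ref{prop:bvectors} attached to the short edge $\{1,2\}$ (resp.\ $\{2,3\}$) and the long edges emanating from $v_2$ toward $v_3,v_4,v_5$. One then verifies $z\in\cone(\Pc^*)$ by testing it against the supporting forms $\sigma_S$ of the facets through $v_2$ (from the paragraph after Proposition \ref{galesevenness}), and verifies $z\notin S_{v_2}$ by evaluating those same forms on a hypothetical decomposition $z=\sum c_g g$: the values $\sigma_S(g)$ together with the last-coordinate bound leave a finite linear Diophantine system in the $c_g\ge0$ which I expect to be infeasible. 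The hypothesis $d\ge4$ enters precisely here: the tangent cone at $v_2$ must be four-dimensional (the short edge to $v_1$ together with three long edges) for such an interior lattice point to escape the monoid, whereas in dimension three the corresponding cones are saturated, in line with $C_3(0,1,2,3)$ being normal.

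The main obstacle is this last step: exhibiting one vector $z$ and certifying $z\notin S_{v_2}$. The difficulty is that the tangent monoid is generated not only by the edge vectors but by all lattice points of the hull of the first $d+1$ vertices, and these interior lattice points tend to fill in candidate holes; indeed the edge triangle $\con(v_1,v_2,v_3)$ and each tetrahedral facet through $v_2$ turn out to be saturated, so the hole must be a genuine interior point of the full-dimensional cone, and every nonnegative combination of the (many) generators must be excluded. A secondary point requiring care is to make the bound on $(z)_d$ explicit enough that the localization argument is valid simultaneously for all $n\ge d+1$ and for all admissible values of the remaining gaps, and to carry the construction out for general $d\ge4$ rather than only $d=4$.
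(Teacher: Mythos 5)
Your framework is the same as the paper's: the paper also works with the tangent monoid at the vertex on the unit edge (it uses $\Ac_{\Pc,3}=\set{x-v_3 : x\in\Pc^*\cap\ZZ^5}$), finds a lattice point $p$ in the tangent cone but not in the monoid, and concludes that $kv_3+p$ for all $k\geq1$ gives infinitely many elements of $(\ZZ\Ac_\Pc\cap\QQ_{\geq0}\Ac_\Pc)\setminus\ZZ_{\geq0}\Ac_\Pc$ (it cites \cite[Exercise 2.23]{brunsgubel}; your elementary first-coordinate count proves the same implication correctly). But this engine is the easy part, and everything after it in your proposal is a plan rather than a proof: you never exhibit the hole $z$, and you explicitly defer the certificate $z\notin S_{v_2}$ to an unspecified ``finite linear Diophantine system which I expect to be infeasible.'' That certificate is precisely the substance of the paper's Proposition \ref{non-normal4}: it takes $p=\bvec{23}+\bvec{134}+\bvec{12345}$, checks $p\in\ZZ^5$ and $p\in\QQ_{\geq0}\Ac_{\Pc,3}$ from the two displayed expansions, uses $\Delta_{23}=1$ to see the coefficient of $v_3$ is below $-1$ so that $p$ lies beyond $\Fc_{1245}$, and then rules out any decomposition $p=\sum w_j$ by a genuinely nontrivial chain: $\sigma_{1234}(p)=1$ forces all but one summand onto the facet $\Fc_{1234}$; Carath\'eodory plus the divisibility of denominators of barycentric coordinates (the remark after Proposition \ref{galesevenness}) pins the support of the complementary part to $\set{1,2,4,i_4}$; and evaluating $\sigma_{123i_4}$, $\sigma_{2345}$, $\sigma_{134i_4}$ on $w=a_2\bvec{23}+a_3\bvec{123}+a_4\bvec{1234}$ forces $a_2=a_3=a_4=0$, i.e.\ $w=0$. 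None of this is routine bookkeeping, so as it stands your proposal has a genuine gap at its core.

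Your route around the dependence on $n$ and on $d$ is also problematic. Bounding the last triangular coordinate $(z)_d=\sigma_{\set{1,\dots,d}}(z)$ does \emph{not} restrict the possible summands to the hull of the first $d+1$ vertices: a lattice point $u\in\Pc^*$ with small $\sigma_{\set{1,\dots,d}}(u)$ need not lie in $\con(\set{v_1,\dots,v_{d+1}})$ --- for instance, lattice points of $\con(\set{v_1,\dots,v_d,v_n})$ close to the facet formed by $\set{1,\dots,d}$ have small last coordinate while involving the far vertex $v_n$ --- so the membership question is not made ``uniform in $n$'' by this observation alone. The paper confronts exactly this difficulty and resolves it differently: it cannot exclude a far vertex, so it keeps one unknown index $i_4$ (possibly $i_4=5$, possibly large) and defeats it with the divisibility argument above. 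Similarly, attempting the construction for all $d\geq4$ simultaneously is harder than necessary: the paper's Corollary \ref{non-normal5} reduces $d\geq5$ to $d=4$ by choosing, via Gale's evenness condition, a facet of $\Pc^*$ that is itself a $(d-1)$-dimensional integral cyclic polytope containing the unit gap, and invoking that a polytope with a non-very-ample face is non-very ample (\cite[Lemma 1]{OH}); grafting this induction onto your scheme would let you concentrate all the work in dimension $4$, where the explicit hole still has to be found and certified.
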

We obtain Theorem \ref{main2} as a conclusion of 
Proposition \ref{non-normal4} and Corollary \ref{non-normal5} below. 
\begin{Proposition}\label{non-normal4}
Let $\Pc = C_4(\tau_1,\dotsc, \tau_n)$. 
If $\Delta_{23} = 1$ or $\Delta_{n-2,n-1} = 1$, then $\Pc$ is not very ample. 
\end{Proposition}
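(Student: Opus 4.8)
The goal is to show that $\Pc = C_4(\tau_1,\dotsc,\tau_n)$ is not very ample when $\Delta_{23}=1$ (the case $\Delta_{n-2,n-1}=1$ following by the reflection symmetry of Lemma~\ref{equiv}). Recall that $\Pc$ fails to be very ample precisely when the set $(\ZZ\Ac_\Pc \cap \QQ_{\geq 0}\Ac_\Pc)\setminus \ZZ_{\geq 0}\Ac_\Pc$ is infinite. So my plan is to produce an \emph{infinite family} of lattice points that lie in the cone $\QQ_{\geq 0}\Ac_\Pc$ (and in $\ZZ\Ac_\Pc = \ZZ^5$ by Lemma~\ref{lemma:basis}) but that are not nonnegative integer combinations of the generators.

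Let me describe the approach concretely. I would focus on the simplex spanned by the four consecutive vertices $v_1,v_2,v_3,v_4$ together with a varying fifth vertex, exploiting the hypothesis $\Delta_{23}=\tau_3-\tau_2=1$. Using the $\bvec{S}$ vectors from Proposition~\ref{prop:bvectors}, I would look for a point of the form
\[
w = m\cdot(\text{convex combination of the } v_i) + (\text{integral correction from some } \bvec{S}),
\]
chosen so that $w$ lands in $\ZZ^5$ and in the rational cone, but the unique rational representation $w=\sum r_i v_i$ has the property that in \emph{every} way of writing $w=w'+w''$ with $w'\in\Pc^*\cap\ZZ^5$ a lattice point at level $1$, the remainder $w''$ fails to be in the cone. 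The tightness $\Delta_{23}=1$ forces the vertices $v_2,v_3$ to be ``adjacent'' in the lattice sense, creating a gap in which the hole cannot be decomposed; the vector $\bvec{23}=\frac{1}{\Delta_{23}}(v_2-v_3)=v_2-v_3$ becomes an honest lattice vector, and I expect the obstructing points to be built by adding integer multiples of such a small-denominator combination to a fixed base point, which is exactly how one generates an infinite family living at a bounded ``distance'' from a facet.

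The key steps, in order, would be: (i) normalize via Lemma~\ref{equiv} so that $\tau_2=0,\ \tau_3=1$, and pass to the matrix form \eqref{eg:matrixdelta} to make coordinates explicit; (ii) write down an explicit candidate lattice point $\alpha^{(k)}$ depending on a parameter $k\in\ZZ_{>0}$, lying at some fixed dilation level $m$, verify $\alpha^{(k)}\in\QQ_{\geq 0}\Ac_\Pc$ by exhibiting nonnegative rational coefficients, and verify $\alpha^{(k)}\in\ZZ^5$; (iii) show $\alpha^{(k)}\notin\ZZ_{\geq 0}\Ac_\Pc$ by arguing that no lattice point of $\Pc^*$ can be ``subtracted'' from $\alpha^{(k)}$ while keeping the remainder in the cone --- here the inequalities $\sigma_S$ defining the relevant facets (with the products $\prod_{i\in S}\Delta_{ij}$) give the numerical obstruction; and (iv) confirm the $\alpha^{(k)}$ are pairwise distinct so the family is genuinely infinite. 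Since $d=4$ this is a $5$-dimensional computation, small enough to control by hand.

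The main obstacle I anticipate is step~(iii): proving the \emph{non}-decomposability for the entire infinite family simultaneously, rather than for a single point. It is easy to find one hole; the delicate part is certifying that a whole sequence stays outside $\ZZ_{\geq 0}\Ac_\Pc$. I expect the cleanest route is to isolate a single facet inequality $\sigma_S(\alpha^{(k)})$ whose value, combined with the constraint $\Delta_{23}=1$, is too small to admit a lattice point of $\Pc^*$ on the ``far'' side --- effectively showing each $\alpha^{(k)}$ sits in a thin lattice-empty region adjacent to a facet. Controlling this uniformly in $k$, and making sure the chosen correction vectors $\bvec{S}$ genuinely push the points into $\ZZ^5$ without accidentally making them decomposable, is where the real care is needed; everything else reduces to the bookkeeping already set up in Section~1.
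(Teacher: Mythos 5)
Your overall strategy (reduce to $\Delta_{23}=1$ by Lemma \ref{equiv}, then exhibit an infinite family of lattice points in $\ZZ\Ac_\Pc \cap \QQ_{\geq 0}\Ac_\Pc$ outside $\ZZ_{\geq 0}\Ac_\Pc$) is the right one, but step (ii) as written cannot work: an infinite family of holes cannot lie ``at some fixed dilation level $m$.'' The lattice points of the cone $\QQ_{\geq 0}\Ac_\Pc$ with first coordinate $m$ are exactly the lattice points of $m\Pc^*$, a bounded set, so there are only finitely many of them; in particular, adding integer multiples of the level-zero vector $\bvec{23}=v_2-v_3$ to a fixed base point exits the cone after finitely many steps. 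Any infinite gap family must climb to arbitrarily high levels, and the paper achieves this via a device missing from your proposal: it passes to the vertex monoid at $v_3$, setting $\Ac_{\Pc,3} \defa \set{x-v_3 \with x \in \Pc^*\cap\ZZ^5}$, and exhibits a \emph{single} hole $p = \bvec{23}+\bvec{134}+\bvec{12345}$ lying in $\QQ_{\geq 0}\Ac_{\Pc,3}\cap\ZZ^5$ but not in $\ZZ_{\geq 0}\Ac_{\Pc,3}$; then $kv_3+p$ for $k=1,2,\ldots$ is automatically an infinite family of gaps of $\Pc$ by \cite[Exercise 2.23]{brunsgubel}. This reduction dissolves precisely the ``uniformly in $k$'' difficulty that you correctly flag as the main obstacle but for which you offer no mechanism: only one finite verification is needed.

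Even for that single hole, your anticipated certificate --- one facet inequality cutting out a thin lattice-empty region --- is too weak; the paper's non-decomposability argument is irreducibly multi-facet. Since $\sigma_{1234}(p)=1$, in any decomposition $p=\sum_j w_j$ with $w_j \in \Ac_{\Pc,3}$ all summands but one satisfy $\sigma_{1234}(w_j)=0$; choosing such a $w\neq 0$, Carath\'eodory's Theorem together with a denominator-divisibility argument (the coefficient of $v_{i_4}$ in $w'=p-w$ must have denominator dividing $\Delta_{i_1i_4}\Delta_{i_2i_4}\Delta_{i_3i_4}\Delta_{3i_4}$) forces $\set{i_1,i_2,i_3}=\set{1,2,4}$ and allows replacing $\Pc^*$ by the subpolytope on $v_1,\dotsc,v_5,v_{i_4}$, whose facets are known by Gale's evenness condition; writing $w = a_2\bvec{23}+a_3\bvec{123}+a_4\bvec{1234}$ with integer coefficients (Proposition \ref{prop:bvectors} (3)) and playing the three facet forms $\sigma_{123i_4}$, $\sigma_{2345}$, $\sigma_{134i_4}$ against $0 \leq \sigma(w) \leq \sigma(p)$, integrality forces $a_2=a_3=a_4=0$, i.e.\ $w=0$, a contradiction. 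Your instinct about where $\Delta_{23}=1$ enters is sound (it makes the $v_3$-coefficient of $p$ drop below $-1$, pushing $p$ beyond the facet $\Fc_{1245}$ and hence out of $\Ac_{\Pc,3}$), but without the vertex-translation device and with step (iii) left entirely open, the proposal does not yet constitute a proof.
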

\begin{proof}
Thanks to Lemma \ref{equiv}, by symmetry, we assume $\Delta_{23} = 1$. Consider the set
\[\Ac_{\Pc,3} \defa \set{x-v_3 : x \in \Pc^* \cap \ZZ^5}. \] 
We will prove that the monoid $\ZZ_{\geq 0} \Ac_{\Pc,3}$ is not normal, 
thus there exists a vector $p \in \ZZ \Ac_{\Pc, 3} \cap \QQ_{\geq 0} \Ac_{\Pc,3}=\QQ_{\geq 0} \Ac_{\Pc, 3} \cap \ZZ^5$ 
such that $p \notin \ZZ_{\geq 0} \Ac_{\Pc,3}$. 
Then, for every integer $k \geq 1$, it holds that 
$k v_3 + p \in (\ZZ\Ac_\Pc \cap \QQ_{\geq 0}\Ac_\Pc) \setminus \ZZ_{\geq 0} \Ac_\Pc$, 
see \cite[Excercise 2.23]{brunsgubel}. Hence, $\Pc$ is not very ample.
 
In the sequel, we denote the facet of $\Pc^*$ spanned by the vertices $v_i,v_j, v_k$ and $v_l$ with $\Fc_{ijkl}$. 
Moreover, we denote the corresponding linear form with $\sigma_{ijkl}$. 
Note that every facet of $\Pc^*$ containing $v_3$ defines also a facet of $\QQ_{\geq 0} \Ac_{\Pc,3}$. 

The following vector has the required properties:
\begin{align*}
p &\defa \bvec{23} + \bvec{134}+\bvec{12345} \\
  &= \frac{\Delta_{12}\Delta_{15}+1}{\Delta_{12}\Delta_{13}\Delta_{14}\Delta_{15}} v_1 + 
     \frac{1}{\Delta_{23}}\left(1-\frac{1}{\Delta_{12}\Delta_{24}\Delta_{25}}\right) v_2 -
     \frac{1}{\Delta_{23}}\left(1+\frac{\Delta_{23}\Delta_{35}-1}{\Delta_{13}\Delta_{34}\Delta_{35}}\right) v_3 \\ 
  &+ \frac{\Delta_{24}\Delta_{45}-1}{\Delta_{14}\Delta_{24}\Delta_{34}\Delta_{45}} v_4 + 
     \frac{1}{\Delta_{15}\Delta_{25}\Delta_{35}\Delta_{45}} v_5. 
\end{align*}
First, one has $p \in \ZZ^5$ from Proposition \ref{prop:bvectors} (1). 
Then, by the second representation of $p$, it is a positive linear combination of the vectors 
$v_1-v_3, v_2-v_3, v_4-v_3$ and $v_5-v_3$. Thus, $p \in \QQ_{\geq 0} \Ac_{\Pc,3}$. 
Moreover, since we assume $\Delta_{23} = 1$, 
the coefficient of $v_3$ is less than $-1$. Hence, $p$ lies beyond the facet $\Fc_{1245}$ 
which is a facet of $\Pc^*$ by Gale's evenness condition (Proposition \ref{galesevenness}). 
Thus, we have $p \notin \Ac_{\Pc,3}$. 

It remains to show that $p$ cannot be written as a sum $\sum w_j$ with $w_j \in \Ac_{\Pc,3}$. 
Suppose that we have such a representation. Then we remark that $p$ has at least two summands. 
Consider a facet $\Fc_{1234}$. 
Then $\sigma_{1234}(p) = \frac{1}{\Delta_{15}\Delta_{25}\Delta_{35}\Delta_{45}} \sigma_{1234}(v_5) = 1$. 
Since $\sigma_{1234}(w_j) \geq 0$, $\sigma_{1234}(w_j) = 0$ for every summand $w_j$ except one. 
Choose one $w_j \neq 0$ with $\sigma_{1234}(w_j) = 0$ and denote it by $w$. 
Further, we set $w' \defa p - w \in \ZZ_{\geq 0} \Ac_{\Pc,3}$ the remaining sum. 
By Carath\'{e}odory's Theorem, 
there exist vertices $v_{i_1},\dotsc,v_{i_4}$ of $\Pc^*$ and nonnegative numbers $\lambda_j \geq 0$, 
such that $w' = \sum_{j=1}^4 \lambda_j (v_{i_j} - v_3)$. 
Let $i_4$ be the greatest of those indices. Since $\sigma_{1234}(w') = 1$ 
and $\sigma_{1234}(v_{i_4}) = \Delta_{1i_4}\Delta_{2i_4}\Delta_{3i_4}\Delta_{4i_4}$, 
we conclude that \[ \lambda_4 \leq \frac{1}{\Delta_{1i_4}\Delta_{2i_4}\Delta_{3i_4}\Delta_{4i_4}} \,.\]
But the vertices $v_{i_1},\dotsc,v_{i_4}$ and $v_3$ define an integral cyclic polytope, 
thus the denominator of the coefficient of $v_{i_4}$ has to be a divisor of 
$\Delta_{i_1 i_4}\Delta_{i_2 i_4}\Delta_{i_3 i_4}\Delta_{3 i_4}$. 
This is only possible if $\set{i_1,i_2,i_3} = \set{1,2,4}$. 
Thus, $w'$ lies in the cone generated by $v_1 - v_3, v_2 - v_3, v_4 - v_3$ and $v_{i_4} - v_3$.
Note that $\sigma_{1234}(w) = 0$ implies that $w$ lies in the cone 
generated by $v_1 - v_3, v_2 - v_3$ and $v_4 - v_3$. 
Thus we can replace the polytope $\Pc^*$ by the polytope $\Qc^*$ 
whose vertices are $v_1,\dotsc,v_5$ and $v_{i_4}$. 
The reason for doing this is that we know the facets of $\Qc^*$. 
Here, $i_4 = 5$ is possible.

We consider the representation 
\[ w = a_1 \bvec{3} + a_2 \bvec{23}+a_3 \bvec{123} + a_4 \bvec{1234} \]
with integer coefficients $a_1, a_2, a_3, a_4$. 
This is possible from Proposition \ref{prop:bvectors} (3).
Since $w$ is in the cone generated by $v_1 - v_3, v_2 - v_3$ and $v_4 - v_3$, 
we have $a_1=0$. 
Now consider a facet $\Fc_{123i_4}$ of $\Qc^*$. We compute 
\[ \sigma_{123i_4}(p) = \frac{1}{\Delta_{45}}(\Delta_{24}\Delta_{45}-1)\Delta_{4i_4} + \frac{1}{\Delta_{45}} \Delta_{5i_4} 
= \Delta_{24}\Delta_{4i_4}-1. \]
Moreover, $\sigma_{123i_4}(w) = - a_4 \Delta_{4i_4}$. From $0 \leq \sigma_{123i_4}(w) \leq \sigma_{123i_4}(p)$, 
we conclude $0 \leq - a_4 \leq \Delta_{24} - 1$. Here we used that $a_4$ is an integer. 
Next, consider a facet $\Fc_{2345}$. We compute 
$\sigma_{2345}(w) = a_3 \Delta_{14}\Delta_{15} + a_4 \Delta_{15}$ and $\sigma_{2345}(p) = \Delta_{12}\Delta_{15}+1$. 
As before, we conclude that $0 \leq a_3 \Delta_{14} + a_4 \leq \Delta_{12}$. 
However, these two constraints can only be satisfied by $a_3 = a_4 = 0$, 
because $\Delta_{14} = \Delta_{12} + \Delta_{24}$ and $\Delta_{15}>1$. 
Finally, we consider a facet $\Fc_{134i_4}$. 
By computing $\sigma_{134i_4}(w) = a_2 \Delta_{12}\Delta_{24}\Delta_{2i_4}$ and 
$\sigma_{134i_4}(p) = \Delta_{12}\Delta_{24}\Delta_{2i_4} - 1$, 
we conclude that $a_2 = 0$. But this means $w=0$, a contradiction to $w \neq 0$.
\end{proof}

By using this proposition, we also obtain 
\begin{Corollary}\label{non-normal5}
Let $\Pc=C_d(\tau_1,\ldots,\tau_n)$, where $d \geq 5$. 
If there is some $i$ with $2 \leq i \leq n-2$ such that $\Delta_{i,i+1} = 1$, 
then $\Pc$ is not very ample. 
\end{Corollary}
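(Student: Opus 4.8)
The plan is to reduce Corollary~\ref{non-normal5} to the four-dimensional case already settled in Proposition~\ref{non-normal4}. The hypothesis gives an index $i$ with $2 \leq i \leq n-2$ and $\Delta_{i,i+1}=1$, i.e.\ two consecutive parameters $\tau_i,\tau_{i+1}$ that are adjacent integers, sitting strictly in the interior of the parameter list. My first step is to pass to a well-chosen four-dimensional \emph{face} of $\Pc^*=C^{*}_d(\tau_1,\ldots,\tau_n)$. By Gale's evenness condition (Proposition~\ref{galesevenness}), a judicious choice of five vertices among $v_1,\ldots,v_n$ spans a facet-chain realizing a copy of $C^{*}_4$ inside $\Pc^*$; concretely I would select the two adjacent vertices $v_i,v_{i+1}$ together with three further vertices $v_a,v_b,v_c$ (with indices surrounding them) so that the convex hull $\Fc=\con(\{v_a,v_i,v_{i+1},v_b,v_c\})$ is a face of $\Pc^*$ that is itself unimodularly equivalent to a four-dimensional cyclic polytope $C_4^{*}(\tau_a,\tau_i,\tau_{i+1},\tau_b,\tau_c)$ in which the gap $\Delta_{i,i+1}=1$ now plays exactly the role of $\Delta_{23}=1$ in Proposition~\ref{non-normal4}. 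The condition $2 \leq i \leq n-2$ is precisely what guarantees that there is room to pick one index below $\tau_i$ and two indices above $\tau_{i+1}$ (or symmetrically), so that after reindexing the distinguished gap lands in the ``second'' slot.

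Next I would transfer the non-very-ampleness from the face to the whole polytope. Since $\Fc$ is a \emph{face} of $\Pc^*$, its supporting hyperplane realizes $\Fc$ as an intersection $\Pc^* \cap H$, and consequently $\Ac_\Fc = \Fc \cap \ZZ^{d+1} = \Pc^* \cap H \cap \ZZ^{d+1}$ is a ``facial'' subset of $\Ac_\Pc$. The key point is that for a face, the failure of the integer decomposition property on the face cannot be repaired using lattice points of the ambient polytope: any representation $\alpha=\alpha_1+\cdots+\alpha_m$ with $\alpha \in m\Fc$ and $\alpha_j \in \Pc^*\cap\ZZ^{d+1}$ forces every $\alpha_j$ to lie on $H$, hence in $\Fc$, because the linear form defining $H$ is nonnegative on $\Pc^*$ and sums to the value it takes on $\alpha$. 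Therefore a ``hole'' of $\Fc$ --- a point in $(\ZZ\Ac_\Fc \cap \QQ_{\geq 0}\Ac_\Fc)\setminus\ZZ_{\geq 0}\Ac_\Fc$ --- remains a hole of $\Pc$. Proposition~\ref{non-normal4} (applied to the $C_4^{*}$ living on $\Fc$) produces, for the adjacent gap equal to $1$, an entire infinite family of such holes of the form $kv_3+p$; their images are infinitely many elements of $(\ZZ\Ac_\Pc \cap \QQ_{\geq 0}\Ac_\Pc)\setminus\ZZ_{\geq 0}\Ac_\Pc$, so $\Pc$ is not very ample.

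The main obstacle, and the step requiring the most care, is the precise verification that the chosen five-element index set $S$ forms a face of the correct \emph{dimension four} rather than merely a simplex in some random position, and that the unimodular normalization of Lemma~\ref{lemma:delta} really carries the relevant gap into the slot where Proposition~\ref{non-normal4} needs it. Because $d \geq 5$, the four-dimensional cyclic polytope sits as a proper face, and I must confirm via Gale's evenness condition that the complementary indices satisfy the parity constraint; here the freedom granted by $2 \leq i \leq n-2$ is essential and I would handle the two boundary sub-cases (whether the surrounding indices are taken below or above) using the reflection symmetry from Lemma~\ref{equiv}, exactly as Proposition~\ref{non-normal4} disposed of $\Delta_{23}=1$ versus $\Delta_{n-2,n-1}=1$. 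Once the face is correctly identified and its lattice points matched with $\Ac_{\Pc,3}$, the inheritance of holes is formal and the conclusion follows.
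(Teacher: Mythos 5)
Your proposal is correct and rests on the same two pillars as the paper's proof: Proposition~\ref{non-normal4} applied to a cyclic-polytope face carrying the unit gap in the right slot, and the fact that non-very-ampleness lifts from a face to the ambient polytope. The organization differs, though. The paper inducts on $d$, descending one dimension at a time through \emph{facets} (for $d=5$ the facets $\con(\{v_1,v_i,v_{i+1},v_{i+2},v_{i+3}\})$ and $\con(\{v_{n-4},\ldots,v_n\})$; for $d\geq 6$ analogous contiguous-run facets chosen according to the parity of $d$), so that Gale's evenness condition (Proposition~\ref{galesevenness}) is only ever invoked for facets, where it applies verbatim; the face-transfer step is outsourced to \cite[Lemma 1]{OH}, whereas you prove it inline --- your supporting-hyperplane argument is exactly that lemma and is correct. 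Your telescoped route, jumping straight to a four-dimensional face, works but needs one point the paper's route sidesteps: Proposition~\ref{galesevenness} characterizes facets, not arbitrary faces, so to certify your five-element set as a face when $d>5$ you must use that cyclic polytopes are simplicial (faces are exactly subsets of facets) and exhibit a facet containing it --- e.g.\ for $i\leq n-3$ take $\{1,i,i+1,i+2,i+3\}$, whose interior block has even length, and pad with adjacent pairs or end-runs up to $d$ elements (possible since $n\geq d+1$); for $i=n-2$ take $\{n-4,\ldots,n\}$, which lands the gap in the $\Delta_{n'-2,n'-1}$ slot with $n'=5$, consistent with your appeal to Lemma~\ref{equiv}. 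Your remaining checks are sound: Lemma~\ref{lemma:delta}, with the five chosen indices ordered first, identifies the face with its induced lattice as a genuine $C^{*}_4$, Lemma~\ref{lemma:basis} removes any lattice-index concern, and the infinitely many holes of the face remain holes of $\Pc$. In short, the paper's induction keeps every Gale verification at the facet level at the cost of iterating; your direct descent is shorter once the face-extension detail is nailed down.
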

\begin{proof}
We prove this by induction on $d$. 

When $d=5$, let $\Fc_i=\con(\{v_1,v_i,v_{i+1},v_{i+2},v_{i+3}\})$ for $2 \leq i \leq n-3$ 
and $\Fc_{n-2}=\con(\{v_{n-4},v_{n-3},v_{n-2},v_{n-1},v_n\})$. 
By Gale's evenness condition, each $\Fc_i$ is a facet of $\Pc^*$. 
When $\Delta_{i,i+1} = 1$ for some $i$ with $2 \leq i \leq n-2$, 
it then follows from Proposition \ref{non-normal4} that $\Fc_i$ is not very ample. 
Thus, $\Pc$ itself is not very ample, either. (See \cite[Lemma 1]{OH}.) 

Now, let $d \geq 6$. For $2 \leq i \leq n-d+2$, we set 
\begin{eqnarray*}
\Fc_i=
\begin{cases}
\con(\{v_1,v_i,\ldots,v_{i+d-2}\}), \;\;\;\; &\text{when} \; d \; \text{is odd}, \\
\con(\{v_{i-1},v_i,\ldots,v_{i+d-2}\}), &\text{when} \; d \; \text{is even}. 
\end{cases}
\end{eqnarray*}
Again, Gale's evenness condition guarantees that each $\Fc_i$ is a facet of $\Pc^*$. 
When $\Delta_{i,i+1}=1$ for some $i$ with $2 \leq i \leq n-2$, 
since each facet is also an integral cyclic polytope of dimension $d-1$, 
either $\Fc_i$ or $\Fc_{d-n+2}$ is not very ample by the hypothesis of induction. 
Therefore, $\Pc$ is not very ample. 
\end{proof}

On the case where $d = 2$, 
it is well known that there exists a unimodular triangulation for 
every integral convex polytope of dimension 2. 
Therefore, integral convex polytopes of dimension 2 are always normal. 

On the case where $d = 3$, 
exhaustive computational experiences lead us to give the following 
\begin{Conjecture}\label{conj:3}
All cyclic polytopes of dimension $3$ are normal. 
\end{Conjecture}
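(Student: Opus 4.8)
The plan is to reduce the conjecture to a statement about a single tetrahedron and then to peel lattice points off it. By Lemma \ref{covering} it suffices to treat the simplex case $n=d+1=4$, and by Lemma \ref{equiv} we may normalize $\tau_1=0$, so that $\Pc^*=C^*_3(\tau_1,\ldots,\tau_4)$ is determined by the three gaps $\Delta_{12},\Delta_{23},\Delta_{34}$. Following the reduction carried out in Section 2, normality is equivalent to the peeling statement \eqref{assert}: for every lattice point $\alpha=\sum_{i=1}^4 r_iv_i\in\QQ_{\geq 0}\Ac_\Pc\cap\ZZ^4$ we must produce rationals $(r_1',\ldots,r_4')$ with $\sum r_i'=1$, $0\leq r_i'\leq r_i$ and $\sum r_i'v_i\in\ZZ^4$. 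After the first step of that reduction we may assume $0\leq r_i<1$ for all $i$ and $2\leq m\leq d=3$, so for $d=3$ only the two cases $m=2$ and $m=3$ remain.

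Next I would exploit the fact, recorded in the Remark after Theorem \ref{main1}, that the edge $\con(v_i,v_j)$ has lattice length $\Delta_{ij}$, so that its interior lattice points are precisely the height-$1$ points $\tfrac{k}{\Delta_{ij}}v_j+\tfrac{\Delta_{ij}-k}{\Delta_{ij}}v_i$. Peeling along such an edge, exactly as in the third step of the proof of Theorem \ref{main1}, succeeds for a pair $\set{i,j}$ precisely when the interval $[\,\Delta_{ij}(1-r_i),\ \Delta_{ij}r_j\,]$ contains an integer, i.e.\ when the excess $r_i+r_j-1$ is at least $1/\Delta_{ij}$. Now Lemma \ref{key} produces a subset $\set{i_1,\ldots,i_l}$ with $2\leq l\leq d=3$ realizing total excess at least $\tfrac{1}{d+1}=\tfrac14$ while $\sum_{j<l}r_{i_j}\leq 1$; when $l=2$ this is a pair, and edge-peeling disposes of every configuration in which that edge has $\Delta_{i_1i_2}\geq 4$. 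When $l=3$, or when the relevant gaps are all small, I would instead peel off a height-$1$ lattice point lying in the relative interior of a triangular facet $\con(v_{i_1},v_{i_2},v_{i_3})$, which is itself a $2$-dimensional integral polytope and hence normal; here one has a genuinely two-parameter family of candidate lattice points, and integrality of the resulting vector is controlled via the basis $\bvec{i_1},\bvec{i_1i_2},\bvec{i_1i_2i_3}$ of Proposition \ref{prop:bvectors}.

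The hard part will be the small-gap regime, where the slack $\epsilon(l)$ used in Theorem \ref{main1} is no longer available. The prototypical obstruction is the balanced point $r_1=\cdots=r_4=\tfrac12$, for which $\alpha=\tfrac12(v_1+v_2+v_3+v_4)$ is an honest lattice point exactly when an even number of the $\tau_i$ are odd; this is the cyclic analogue of the Reeve-tetrahedron failure of the integer decomposition property, and the crux is to show that the Vandermonde structure never realizes it. The obstacle in the facet-peeling step is that two-dimensional normality only guarantees a lattice point somewhere in the triangle, not one dominated coordinatewise by $(r_{i_1},r_{i_2},r_{i_3})$, so reconciling the integrality of $\sum r_i'v_i$ with the inequalities $0\leq r_i'\leq r_i$ simultaneously is the technical heart. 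I would attack this by showing that normality of the simplex depends only on the residues of $\Delta_{12},\Delta_{23},\Delta_{34}$ modulo a bounded modulus together with the record of which gaps are small, thereby cutting the infinitely many configurations down to a finite check that can be completed by the edge- and facet-peeling above. Establishing such a finite reduction rigorously is, I expect, the principal difficulty, and is presumably why the statement is offered here only as a conjecture.
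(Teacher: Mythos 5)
First, a point of orientation: the paper contains no proof of this statement --- it appears only as Conjecture \ref{conj:3}, supported by ``exhaustive computational experiences'' --- so there is no argument of the authors' to compare yours against, and the honest question is whether your proposal settles the conjecture. It does not, and you concede as much in your closing sentence. The parts of your plan that are solid are precisely the parts borrowed from Section 2: the reduction to tetrahedra via Lemma \ref{covering}, the normalization via Lemma \ref{equiv}, the reformulation \eqref{assert}, and the edge-peeling mechanism (with one small repair: the interval $[\Delta_{ij}(1-r_i),\,\Delta_{ij}r_j]$ containing an integer is the precise criterion, while excess $r_i+r_j-1\geq 1/\Delta_{ij}$ is only a sufficient condition, not an equivalence as your ``i.e.'' suggests). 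But this machinery is exactly what proves Theorem \ref{main1}, which for $d=3$ yields normality only when every gap is at least $d^2-1=8$. Lemma \ref{key} guarantees a subset with excess $\tfrac14$, yet you cannot choose which subset it produces: if it returns a pair with $\Delta_{i_1i_2}\leq 3$, or an $l=3$ subset whose denominators are products of two small gaps, the peeling threshold is not met --- and the small-gap regime (e.g. $C_3(0,1,2,3)$, flagged in the paper's own Remark as lying far outside Theorem \ref{main1}) is the entire content of the conjecture.

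The two steps you correctly identify as the heart are asserted rather than proved, and neither is routine. For facet peeling, normality of the $2$-dimensional facet gives lattice-point decompositions inside the facet's cone, but you need a height-one point satisfying the coordinatewise dominance $r_i'\leq r_i$ in the ambient barycentric coordinates, and nothing guarantees any lattice point of the triangle is so dominated; you name this obstacle but offer no mechanism to overcome it. More seriously, the proposed finiteness reduction --- that normality of the tetrahedron depends only on the residues of $\Delta_{12},\Delta_{23},\Delta_{34}$ modulo some bounded modulus --- is entirely unsubstantiated: the denominators controlling integrality of $\sum_i r_i'v_i$ are the products $\prod_{k\neq j}|\Delta_{kj}|$ (cf.\ the discussion after Proposition \ref{galesevenness} and the vectors $\bvec{S}$ of Proposition \ref{prop:bvectors}), which grow without bound with the gaps, so the box points obstructing the integer decomposition property vary with the actual gap values and no candidate modulus, nor any periodicity argument, is given. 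Even the single Reeve-type point $r_1=\cdots=r_4=\tfrac12$ that you single out is only one of the box points one would have to control. Without that periodicity, there is no finite check to perform, and the argument never leaves the regime already covered by Theorem \ref{main1}; the conjecture remains exactly as open after your proposal as before it.
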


Moreover, by computational experiences together with Proposition \ref{non-normal4}, 
we also conjecture a complete characterization of normal cyclic polytopes of dimension 4. 
\begin{Conjecture}\label{conj:4}
A cyclic polytope of dimension $4$ is normal if and only if we have 
$$\Delta_{23} \geq 2 \;\;\; \text{and} \;\;\; \Delta_{n-2,n-1} \geq 2.$$ 
\end{Conjecture}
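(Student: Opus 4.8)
The conjectured equivalence splits into two implications of very different natures, and one of them is in fact already settled by the results of this section.

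\textbf{Necessity.} Since a normal polytope is trivially very ample---the set $(\ZZ\Ac_{\Pc}\cap\QQ_{\geq 0}\Ac_{\Pc})\setminus\ZZ_{\geq 0}\Ac_{\Pc}$ is then empty, hence finite---the contrapositive of Proposition \ref{non-normal4} already gives ``$\Pc$ normal $\Rightarrow \Delta_{23}\geq 2$ and $\Delta_{n-2,n-1}\geq 2$''. Thus nothing new is needed here, and the whole problem reduces to the sufficiency direction, where the genuine difficulty lies.

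\textbf{Sufficiency; the simplex case.} The first step I would attempt is the base case $n=5$: to show that $C_4(\tau_1,\ldots,\tau_5)$ is normal whenever $\Delta_{23}\geq 2$ and $\Delta_{34}\geq 2$, with the two \emph{outer} gaps $\Delta_{12},\Delta_{45}$ allowed to be as small as $1$. The framework of Section 2 applies: by Lemma \ref{lemma:basis} it suffices to establish the integer decomposition property, i.e.\ given $\alpha=\sum_{i=1}^{5}r_iv_i$ at integral height $m=\sum r_i\in\{2,3,4\}$ with $0\le r_i\le 1$, to split off a height-$1$ lattice point $\sum r_i'v_i$ with $0\le r_i'\le r_i$. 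Lemma \ref{key} still produces a subtuple whose partial sums straddle $1$ by at least $\tfrac{1}{d+1}=\tfrac{1}{5}$, and the rounding procedure built on the vectors $\bvec{S}$ of Proposition \ref{prop:bvectors} and the recursion of Lemma \ref{Z} remains the right tool. The essential point is that the crude threshold $D=d^2-1=15$ from Theorem \ref{main1} must be replaced by a much sharper, dimension-four-specific estimate: one should preferentially round along the inner edges and faces, those involving $v_2,v_3,v_4$ where the gaps are $\geq 2$, since it is precisely the inner gaps that control the denominators which have to be cleared.

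\textbf{Sufficiency; general $n$ and the main obstacle.} Here is where I expect the real resistance, and why Lemma \ref{covering} cannot simply be invoked. Under the hypothesis $\Delta_{23},\Delta_{n-2,n-1}\geq 2$ the \emph{interior} gaps may still equal $1$, so a five-element subset $\{k_1<\cdots<k_5\}$ may have an inner unit gap $\tau_{k_3}-\tau_{k_2}=1$ or $\tau_{k_4}-\tau_{k_3}=1$; by the base case (applied to the subset) such a sub-simplex is non-normal. These bad sub-simplices are not faces of $\Pc^*$, so they do not directly contradict normality, but they do block the covering argument. I would therefore replace the naive covering by a \emph{selective} Carath\'eodory argument: given a lattice point $\alpha$ of height $m\geq 2$ in $\QQ_{\geq 0}\Ac_{\Pc}\cap\ZZ^{5}$, one must show that among the full-dimensional simplices on the vertices of $\Pc^*$ that contain $\alpha$ there is always a ``good'' one, i.e.\ one whose two inner gaps are $\geq 2$, and then apply the simplex case to that simplex alone. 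Proving that a good containing simplex always exists---presumably by exploiting Gale's evenness condition (Proposition \ref{galesevenness}) together with the boundary constraints $\Delta_{23},\Delta_{n-2,n-1}\geq 2$ to steer the choice of vertices away from the unit-gap pairs---is, I expect, the crux of the conjecture and the step least likely to yield to a routine computation.
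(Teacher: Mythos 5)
You are addressing a statement that the paper itself leaves as a \emph{conjecture}: it offers no proof, only computational evidence plus the necessity half implicit in Proposition \ref{non-normal4}. Your necessity argument is correct and is exactly what the paper has in hand: normality makes $(\ZZ\Ac_\Pc\cap\QQ_{\geq 0}\Ac_\Pc)\setminus\ZZ_{\geq 0}\Ac_\Pc$ empty, hence finite, so $\Pc$ is very ample, and the contrapositive of Proposition \ref{non-normal4} forces $\Delta_{23}\geq 2$ and $\Delta_{n-2,n-1}\geq 2$. Up to there, nothing is missing.

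The sufficiency direction, however, is a research plan, not a proof, and it has two concrete unfilled gaps. (i) \textbf{The simplex case $n=5$.} The Section 2 machinery genuinely breaks when some gap is small: the rounding in the third and fourth steps of the proof of Theorem \ref{main1} requires $|\Delta_{ij}|\geq D=d^2-1$ for \emph{every} pair that occurs, and the threshold $1+\frac{1}{d+1}$ in Lemma \ref{key} is calibrated precisely against that $D$ via the estimates \eqref{property} for $\epsilon(l)$. With $\Delta_{12}=1$ or $\Delta_{45}=1$ allowed, those estimates fail outright; you acknowledge that a ``dimension-four-specific'' replacement is needed but do not exhibit modified integers $p_j$, a modified $\epsilon$, or any verification that rounding ``along the inner edges'' clears the denominators, which in general involve the outer differences $\Delta_{1j},\Delta_{j5}$ as well. (ii) \textbf{The covering step.} You rightly observe that Lemma \ref{covering} is unusable here, since interior unit gaps produce (by the conjectured base case) non-normal $4$-simplices among the vertex subsets. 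But your proposed repair --- that every lattice point of height $\geq 2$ lies in a containing vertex simplex whose two inner gaps are $\geq 2$ --- is itself an unproved claim, and it may well be false even if the conjecture is true: normality only demands that each such point admit \emph{some} integer decomposition, not that it lie in a normal vertex simplex, so a point close to a long run of unit-gap vertices could conceivably be covered only by ``bad'' simplices while still decomposing across several of them. Since both (i) and (ii) are exactly the open content of the conjecture, your proposal does not settle it; it correctly isolates the obstacles, which is consistent with the paper's decision to state the characterization only as Conjecture \ref{conj:4}.
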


By considering the foregoing two conjectures and Theorem \ref{main1}, 
the following statement seems to be natural for us. 
\begin{Conjecture}
If $\Pc = C_d(\tau_1,\dotsc,\tau_n)$ is normal and $\Pc' = C_d(\tau_1',\dotsc,\tau_n')$ 
satisfies $\tau_j'-\tau_i' \geq \Delta_{ij}$ for all $1\leq i < j \leq n$, then $\Pc'$ is also normal.
\end{Conjecture}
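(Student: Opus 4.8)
The first move is to recast the hypothesis in terms of the consecutive gaps. Write $\delta_i \defa \Delta_{i,i+1}$ and $\delta_i' \defa \tau_{i+1}' - \tau_i'$, so that every $\Delta_{ij} = \sum_{k=i}^{j-1}\delta_k$ and $\tau_j'-\tau_i' = \sum_{k=i}^{j-1}\delta_k'$. Taking $j=i+1$ shows that the assumption $\tau_j'-\tau_i' \ge \Delta_{ij}$ for all $i<j$ is equivalent to the family of inequalities $\delta_i' \ge \delta_i$ for $1 \le i \le n-1$; conversely these imply all the pairwise ones by summation. Moreover, by Lemma \ref{lemma:delta} the polytope $\Pc^*$ is, up to unimodular equivalence, governed solely by the numbers $\Delta_{ij}$, hence normality is a property of the tuple $(\delta_1,\dotsc,\delta_{n-1}) \in \ZZ_{>0}^{n-1}$ alone. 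Thus it suffices to prove the following monotonicity: increasing a single gap $\delta_j$ by one preserves normality, the general case following by iteration. I would realize this elementary step by the explicit replacement $\tau_i \mapsto \tau_i$ for $i \le j$ and $\tau_i \mapsto \tau_i+1$ for $i>j$.

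With this reduction in place, the natural plan is to run the argument through $d$-dimensional subsimplices, as in Lemma \ref{covering}: to conclude that $\Pc'$ is normal it is enough to show that every simplex $\Qc' = \con(\set{v_{i_1}',\dotsc,v_{i_{d+1}}'})$ on vertices of $\Pc'$ is normal. Each such $\Qc'$ is itself an integral cyclic polytope whose consecutive gaps dominate those of the corresponding subsimplex $\Qc = \con(\set{v_{i_1},\dotsc,v_{i_{d+1}}})$ of $\Pc$. Consequently the problem splits into two pieces: (a) a \emph{monotonicity statement for simplices} — if a $d$-dimensional cyclic simplex with gaps $\delta$ is normal and $\delta' \ge \delta$ componentwise, then the one with gaps $\delta'$ is normal; and (b) the assertion that each full-dimensional subsimplex $\Qc$ of the normal polytope $\Pc$ is itself normal. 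For (a) I would revisit the denominator-clearing mechanism of Section 2, where a subcombination summing to $1$ is assembled by choosing integers $p_j$ with $Z_l(j)\in\ZZ$ (Lemma \ref{Z}) inside windows whose sizes are fixed by the products $\prod \Delta_{ij}$ (Proposition \ref{prop:bvectors}); the point is to rework this as a \emph{relative} comparison between $\delta$ and $\delta'$, rather than against the uniform bound $d^2-1$, tracking how a valid choice of the $p_j$ for $\delta$ constrains the analogous choice for $\delta'$.

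The genuine difficulty is step (b): a full-dimensional subsimplex $\Qc$ of $\Pc$ is in general \emph{not} a face of $\Pc$, so normality of $\Pc$ does not transfer to it. Indeed Lemma \ref{covering} supplies only the implication ``all subsimplices normal $\Rightarrow$ $\Pc$ normal,'' and its converse is exactly what is missing: a level-$m$ lattice point of $\Qc^*$ does decompose inside $\Pc^*$ by normality, but the summands need not land back in $\Qc^*$. I therefore expect the main obstacle to be either proving this converse for cyclic polytopes specifically, or bypassing subsimplices altogether. For the latter, the cleanest target is a direct structural comparison of the ``gap sets'' $H(\Pc) \defa (\ZZ\Ac_\Pc \cap \QQ_{\ge 0}\Ac_\Pc)\setminus \ZZ_{\ge 0}\Ac_\Pc$: one would construct a combinatorial correspondence sending $H(\Pc')$ into an image of $H(\Pc)$, so that $H(\Pc)=\emptyset$ forces $H(\Pc')=\emptyset$. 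Building such a correspondence is delicate precisely because the two vertex configurations are related nonlinearly through the Vandermonde structure, and any candidate map must be simultaneously compatible with the integrality constraints coming from the products $\prod_{i\in S}\Delta_{ij}$ and with all the facet inequalities $\sigma_S \ge 0$ dictated by Gale's evenness condition (Proposition \ref{galesevenness}). Reconciling these two families of constraints under a single gap increase is, I expect, where the real work lies.
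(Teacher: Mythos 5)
The statement you were given is not proved anywhere in the paper: it appears only as a Conjecture, offered without argument and motivated by Theorem \ref{main1} and Conjectures \ref{conj:3} and \ref{conj:4}. So there is no paper proof to compare against, and your text, read honestly, is not a proof either. Your preliminary reductions are correct as far as they go: the hypothesis $\tau_j'-\tau_i' \geq \Delta_{ij}$ for all $i<j$ is indeed equivalent to componentwise domination $\delta_i' \geq \delta_i$ of consecutive gaps; by Lemma \ref{lemma:delta} (whose matrix depends only on the $\Delta_{ij}$) together with Lemma \ref{equiv}, normality is a function of the tuple $(\delta_1,\dotsc,\delta_{n-1})$; and it therefore suffices to treat a single increment $\delta_j \mapsto \delta_j + 1$. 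But the two pieces (a) and (b) into which you split the remaining work are both left open --- you acknowledge this yourself --- so the argument never closes.

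Moreover, the gaps are genuine and the structure is somewhat circular. Piece (a), monotonicity for cyclic simplices, is precisely the case $n=d+1$ of the conjecture itself, so the reduction trades the conjecture for a special case of it; and the Section 2 machinery will not deliver it ``relatively,'' because the construction of the $p_j$ in the fourth step is anchored to the absolute threshold via $\epsilon(l)=(l-1)/D$ with $D=d^2-1$, and neither Lemma \ref{Z} nor Proposition \ref{prop:bvectors} compares two gap vectors: raising one gap changes every window $\prod_{k}\lvert\Delta_{i_k i_j}\rvert$ simultaneously, and no map is given carrying a valid choice of the $p_j$ for $\delta$ to one for $\delta'$. Piece (b) is worse: Lemma \ref{covering} is strictly one-directional, and the converse ``$\Pc$ normal $\Rightarrow$ every full-dimensional subsimplex on vertices of $\Pc$ is normal'' is not only unproven but would not even follow from the conjecture being true, since the gaps of a subsimplex are the sums $\Delta_{i_k,i_{k+1}}$ and the conjecture's hypothesis gives you no normal cyclic polytope that they dominate in the required sense. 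Your fallback --- a direct correspondence sending the gap set $H(\Pc')=(\ZZ\Ac_{\Pc'}\cap\QQ_{\geq 0}\Ac_{\Pc'})\setminus\ZZ_{\geq 0}\Ac_{\Pc'}$ into an image of $H(\Pc)$ --- correctly names where the real difficulty sits, but no such correspondence is constructed. The accurate conclusion is that the statement remains open, which is exactly the status the paper assigns it.
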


Finally, we also state 
\begin{Conjecture}
If an integral cyclic polytope is very ample, then it is also normal. 
\end{Conjecture}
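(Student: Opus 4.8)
The plan is to prove the conjecture in contrapositive form, upgrading the mechanism already used in Proposition \ref{non-normal4}. The key external input is \cite[Exercise 2.23]{brunsgubel}: a hole in a corner monoid produces an infinite family of global holes. Concretely, for a vertex $v_j$ of $\Pc^*$ put $\Ac_{\Pc,j}\defa\set{x-v_j:x\in\Pc^*\cap\ZZ^{d+1}}$ and let $C_j\defa\QQ_{\geq 0}\Ac_{\Pc,j}$ be the corner cone at $v_j$. If some $p\in C_j\cap\ZZ^{d+1}$ fails to lie in $\ZZ_{\geq 0}\Ac_{\Pc,j}$, then $kv_j+p$ is a global hole for every $k\geq 1$, so $\Pc$ is not very ample. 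Hence it suffices to establish the local-to-global statement: \emph{for an integral cyclic polytope $\Pc$, if every corner monoid $\ZZ_{\geq 0}\Ac_{\Pc,j}$ is normal, then $\Pc$ itself is normal.} This is exactly the point at which the cyclic hypothesis must enter, since the implication is false for general polytopes; the cases $d\leq 2$ are trivial and $d=3$ is subsumed by Conjecture \ref{conj:3}, so the substance lies in $d\geq 4$.

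First I would reduce to a clean decomposition problem, just as in Section 2. Since $\ZZ\Ac_\Pc=\ZZ^{d+1}$ by Lemma \ref{lemma:basis}, normality is equivalent to the integer decomposition property, and I would induct on the degree $m$ of a point $\alpha=\sum_i r_iv_i\in\QQ_{\geq 0}\Ac_\Pc\cap\ZZ^{d+1}$ with $\sum_i r_i=m\geq 2$. If some $r_i\geq 1$ one peels off $v_i$ and applies induction, so the remaining case is $0\leq r_i<1$ for all $i$. Here is the crux: choosing a vertex $v_j$ in the support and using $\sum_i r_i=m$, one has
\[ \alpha-mv_j=\sum_i r_i(v_i-v_j)\in C_j\cap\ZZ^{d+1}. \]
Normality of the corner monoid at $v_j$ then furnishes integers $c_x\geq 0$ with $\alpha-mv_j=\sum_x c_x(x-v_j)$, whence $\alpha=(m-\sum_x c_x)v_j+\sum_x c_x\, x$. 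Provided $\sum_x c_x\leq m$, this writes $\alpha$ as a sum of exactly $m$ lattice points of $\Pc^*$, completing the induction.

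The main obstacle is therefore to guarantee a vertex $v_j$ for which the corner decomposition can be chosen with $\sum_x c_x\leq m$; this is precisely the gap between very ample and normal in general, so all the difficulty is concentrated here. My plan is to select $v_j$ via Gale's evenness condition (Proposition \ref{galesevenness}) together with the growth of the $\Delta_{ij}$, mimicking the facet-crossing analysis of Proposition \ref{non-normal4}: I would fix a facet $\Fc$ through $v_j$ and use its supporting form to bound the weights, exploiting that $\sigma_\Fc(x-v_j)=\sigma_\Fc(x)$ is a product of differences $\prod\Delta$, so that a few high-weight lattice points already account for $\alpha-mv_j$. The natural rational decomposition $\sum_{i\neq j}r_i(v_i-v_j)$ has total weight $\sum_{i\neq j}r_i=m-r_j\leq m$, and the task is to show that an \emph{integral} decomposition of comparable total weight exists; I expect the $\bvec{S}$-calculus of Proposition \ref{prop:bvectors} and the recursion of Lemma \ref{Z} to convert the rational weights into integral ones without inflating $\sum_x c_x$ beyond $m$. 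Since each facet of a cyclic polytope is again a cyclic polytope of dimension $d-1$, I would run the whole argument by induction on $d$, reducing the choice of a good vertex to the facet containing it, in parallel with the descent of Corollary \ref{non-normal5} and the five-step induction proving Theorem \ref{main1}. The hardest single point will be ruling out the pathology—possible for general polytopes—in which \emph{every} vertex forces $\sum_x c_x>m$; proving that the cyclic differences $\Delta_{ij}$ make this impossible is, I expect, where essentially all the work lies.
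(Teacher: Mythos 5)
This statement is one of the paper's open conjectures: the paper offers no proof of it, so there is no argument of the authors' to compare against, and your proposal does not close the gap either. Your framing is sound as far as it goes: very ampleness of $\Pc$ is indeed equivalent to normality (saturation) of every corner monoid $\ZZ_{\geq 0}\Ac_{\Pc,j}$ (the direction you need follows from \cite[Exercise 2.23]{brunsgubel}, exactly as in Proposition \ref{non-normal4}), and your degree induction correctly reduces everything to producing, for some vertex $v_j$, an integral corner decomposition $\alpha - m v_j = \sum_x c_x(x - v_j)$ with $\sum_x c_x \leq m$. But that bound is not a technical lemma left to check: it \emph{is} the conjecture, merely translated into corner-monoid language. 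The existence of very ample non-normal polytopes (cited in the paper via \cite[Exercise 2.24]{brunsgubel}) shows precisely that corner-monoid normality can hold while every corner decomposition of some $\alpha$ inflates $\sum_x c_x$ beyond $m$; ruling this out for cyclic polytopes is the entire content of the statement, and your proposal offers no mechanism for it beyond the expectation that the $\bvec{S}$-calculus and Lemma \ref{Z} will ``convert the rational weights into integral ones without inflating'' the total. Those tools were deployed in the paper only under the hypothesis $\Delta_{i,i+1} \geq d^2 - 1$, where normality holds unconditionally (Theorem \ref{main1}) and very ampleness is irrelevant; the hard instances of the conjecture are exactly the small-difference polytopes not covered by Theorem \ref{main1} and not excluded by Theorem \ref{main2} (for example, $d = 4$ with $\Delta_{12} = 1$ but $\Delta_{23}, \Delta_{n-2,n-1} \geq 2$), and there the denominators $\prod \Delta_{i_k i_j}$ give no usable control of the weights $c_x$.

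Two further points. First, disposing of $d = 3$ by appeal to Conjecture \ref{conj:3} is circular within this paper: that conjecture is itself unproven, so your plan cannot claim the three-dimensional case. Second, your intended induction on $d$ via facets (parallel to Corollary \ref{non-normal5}) transfers \emph{non}-very-ampleness from a facet to $\Pc$, which is the wrong direction here: normality or very ampleness of all facets of $\Pc^*$ says nothing a priori about points deep in the cone over $\Pc^*$, so the descent step would need an entirely new argument. In summary: the reduction to corner monoids is correct and matches the mechanism of Section 3, but the proposal concentrates all the difficulty into a single unproven step that is equivalent to the conjecture itself, and so it does not constitute a proof.
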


Actually, it often happens that a very ample integral convex polytope 
is also normal, that is to say, the normality of an integral convex polytope 
is equivalent to what it is very ample. 
Hence, the above conjecture occurs in the natural way. 
On the other hand, it is also known that there exists an integral convex polytope 
which is not normal but very ample. See \cite[Exercise 2.24]{brunsgubel}.


\begin{thebibliography}{99}
\bibitem{brunsgubel}
W. Bruns and J. Gubeladze, 
``Polytopes, rings and K-theory,''
Springer--Verlag, Heidelberg, 2009. 

\bibitem{brunsherzog}
W. Bruns and J. Herzog, 
``Cohen--Macaulay rings,'' Second Ed., 
Cambridge Univ. Press, Cambridge, 1998. 

\bibitem{Gru}
B. Gr\"unbaum, 
``Convex Polytopes,'' Second Ed., 
Springer--Verlag, Heidelberg, 2003. 

\bibitem{Gubel}
J. Gubeladze, 
Convex normality of rational polytopes with long edges, 
{\em Adv. in Math.}, in press. 

\bibitem{HibiRedBook}
T. Hibi,
``Algebraic Combinatorics on Convex Polytopes,''
Carslaw Publications, Glebe, N.S.W., Australia, 1992.

\bibitem{HHKO}
T. Hibi, A Higashitani, L. Katth\"an and R. Okazaki,
Toric rings arising from cyclic polytopes,
arXiv:1204.5565v1. 

\bibitem{OHnormal}
H. Ohsugi and T. Hibi, 
Normal polytopes arising from finite graphs, 
{\em J. Algebra} {\bf 207} (1998), 409--426.

\bibitem{OH}
H. Ohsugi and T. Hibi, 
Non-very ample configurations arising from contingency tables, 
{\em Annals of the Institute of Statistical Math.} {\bf 62} (2010), 639--644. 

\bibitem{Sch}
A. Schrijver, 
``Theory of Linear and Integer Programming,'' 
John Wiley \& Sons, 1986. 

\bibitem{StanleyGreenBook}
R. P. Stanley,
``Combinatorics and Commutative Algebra,'' Second Ed., 
Birkh\"auser, Boston, 1995.

\bibitem{ziegler}
G. M. Ziegler, 
``Lectures on polytopes,'' 
Springer--Verlag, Heidelberg, 1995. 
\end{thebibliography}

\end{document}